\title{Combinatorial Game Distributions of Steiner Systems}
\author{Yuki Irie\thanks{This paper is a substantially revised version of Section 2.A of the author's Ph.D. thesis, which was prepared under the supervision of Masaaki Kitazume for Chiba University. This revision includes adding a result that characterizes projective Steiner triple systems.}\\
\small  Research Alliance Center for Mathematical Sciences\\[-0.8ex]
\small Tohoku University\\[-0.8ex]
\small Miyagi, Japan\\
\small\texttt {yirie@tohoku.ac.jp}\\
}
\date{}
\date{}
\title{Combinatorial Game Distributions of Steiner Systems}
\begin{document}
\nolinenumbers
\maketitle
\begin{abstract}
The \textbf{\(\sP\)-position sets} of some combinatorial \textbf{games have} special combinatorial \textbf{structures}.
For example, the \textbf{\(\sP\)-position} set of the hexad game, first investigated by Conway and Ryba, is the block set of the Steiner system \(S(5, 6, 12)\) in the shuffle numbering, \textbf{denoted by \(\shD\)}.
There were, however, few known games related to Steiner systems like the hexad game.
\textbf{For a given Steiner system, we construct a game whose \(\sP\)-position set is its block set.}

By using constructed games, we obtain the following two results.
First, we characterize \textbf{\(\shD\) among the 5040 isomorphic \(\ccS(5, 6, 12)\) with point set \(\set{0, 1, \dotsc, 11}\).}
\textbf{For each \(\ccS(5, 6, 12)\), our construction produces a game whose \(\sP\)-position set is its block set.}
\textbf{From \(\shD\), we obtain the hexad game, and this game is characterized as a unique game with the minimum number of positions among the obtained 5040 games.}
Second, we characterize projective Steiner triple systems by using game distributions.
Here, the game distribution of a Steiner system \(\cD\) is the frequency distribution of the numbers of positions in games obtained from \textbf{Steiner systems isomorphic to} \(\cD\).
\textbf{We find that the game distribution of an \(\ccS(\cct, \cct + 1, \ccv)\) can be decomposed into symmetric components and that a Steiner triple system is projective if and only if its game distribution has a unique symmetric component.}
\end{abstract}

\section{Introduction}
\label{sec:orgcf73c6d}
\label{orgaae31b4}
\mylink{sec-intro}The starting point of this paper is a two-player \textbf{(impartial)} game called the \emph{hexad game} \cite{conway-lexicographic-1986, kahane-hexad-2001}.\footnote{\textbf{According to \cite{kahane-hexad-2001}, the hexad game was first investigated by Conway and Ryba}.}
In combinatorial game theory, one of the fundamental goals is to determine the \textbf{\(\sP\)-position} set of a game,
\textbf{where a \(\sP\)-position is a winning position for the Previous player to move}.
The hexad game is notable because its \textbf{\(\sP\)-position} set, which is a subset of \(\binom{[12]}{6}\), is the block set of a Steiner system \(S(5, 6, 12)\) \cite{conway-lexicographic-1986, kahane-hexad-2001},
where \([12] = \set{0, 1, \dotsc, 11}\) and \({[12] \choose 6} = \bigfamily{\ccP \subset [12] : \size{\ccP} = 6}\).
\textbf{Here, a pair \(\bigl([\ccv], \Block\bigr)\) is called a \emph{Steiner system \(S(\cct, \cck, \ccv)\) with point set \([v]\) and block set \(\Block\)} if \(\Block \subseteq {[\ccv] \choose \cck}\) and every \(\ccT \in {[\ccv] \choose \cct}\) is contained in a unique \(\ccB \in \Block\).}
Further, Conway and Sloane \cite{conway-lexicographic-1986} found a game whose \textbf{\(\sP\)-position} set is the block set of a Steiner system \(S(5, 8, 24)\).
No methods, \textbf{however}, were known to find games related to Steiner systems like the above games.

\mylink{p2-4}For a given Steiner system, we construct a game whose \textbf{\(\sP\)-position} set is equal to its block set \textbf{(Corollary \ref{orgc92db7a})}.\footnote{\textbf{Since we start with a \(\sP\)-position set, our approach may be referred to as reverse game theory or mechanism design (see \cite{fraenkel-games-2019})}.}
\yon
We here illustrate our construction using the \(\ccS(1, 2, 4)\) with block set \(\bigfamily{\set{0, 2}, \set{1, 3}}\); 
our aim is to obtain the game shown in Figure \ref{fig:org4b96216}.
We use a digraph to represent a game.
Let \(\Gamma\) be a digraph with vertex set \({[4] \choose 2}\) and edge set
\[
 \biggset{\bigl(\ccP, \ccP^{(\ccp\ \ccq)} \bigr) \in {[4] \choose 2}^2 : \ccq < \ccp,\ \ccp \in \ccP,\ \ccq \not \in \ccP},
\]
where \((\ccp\ \ccq)\) is the transposition of \(\ccp\) and \(\ccq\).
We denote by \(\Position{\Gamma}\) and \(\Edge{\Gamma}\) the vertex and edge sets of \(\Gamma\), respectively.
For example, \(\bigl(\set{2, 3}, \set{2, 3}^{(3\ 0)}\bigr) = \bigl(\set{2, 3}, \set{0, 2}\bigr) \in \Edge{\Gamma}\).
\yoff
For \(\ccP \in \Position{\Gamma}\), let \(\InNeighbors{\ccP}[\Gamma]\) and \(\OutNeighbors{\ccP}[\Gamma]\) be the sets of in-neighbors and out-neighbors of \(\ccP\), respectively.
\textbf{That} is,
\[
 \InNeighbors{\ccP}[\Gamma] = \bigset{\ccR \in \Position{\Gamma} : (\ccR, \ccP) \in \Edge{\Gamma}} \quad \tand \quad
 \OutNeighbors{\ccP}[\Gamma] = \bigset{\ccQ \in \Position{\Gamma} : (\ccP, \ccQ) \in \Edge{\Gamma}}.
\]
For example, \(\InNeighbors{\ymod{\set{0, 2}}}[\Gamma][\bigcbrackets] = \bigfamily{\set{1, 2}, \set{0, 3}, \set{2, 3}}\).
Define
\begin{equation}
\label{equ:def-bposition}
 \bPosition{\Block}[\Gamma] = \Block \cup \bigcup_{\ccB \in \Block} \InNeighbors{\ccB}[\Gamma],
\end{equation}
\textbf{where \(\Block = \bigfamily{\set{0, 2}, \set{1, 3}}\)}.
\textbf{Then \(\bPosition{\Block}[\Gamma] = {[4] \choose 2} \setminus \bigfamily{\set{0, 1}}\).}
Let \(\Induced{\Gamma}{\ymod{\bPosition{\Block}[\Gamma]}}[\bigcbrackets]\) denote the subgraph induced in \(\Gamma\) by \(\ymod{\bPosition{\Block}[\Gamma]}\), that is, the digraph with vertex set \(\ymod{\bPosition{\Block}[\Gamma]}\) and edge set
\[
  \Bigset{(\ccP, \ccQ) \in \Edge{\Gamma}: \ccP, \ccQ \in \ymod{\bPosition{\Block}[\Gamma]}}.
\]
\textbf{We consider the digraph \(\Induced{\Gamma}{\bPosition{\Block}[\Gamma]}[\bigcbrackets]\) as the following two-player game.}
Before \textbf{a} game begins, we put a token on a \textbf{starting vertex} \(\ccP \in \bPosition{\ymod{\Block}}[\Gamma]\).
The first player moves the token from \(\ccP\) to an out-neighbor \(\ccQ\) of \(\ccP\).
Similarly, the second player moves it from \(\ccQ\) to an out-neighbor \(\ccR\) of \(\ccQ\).
In this way, the two players alternately move the token from the current \textbf{vertex} to \textbf{one of its out-neighbors}.
The player who moves the token last wins. 
\yon\ 
In this manner, we can view a digraph as a two-player game.
Then the \(\sP\)-position set of \(\Induced{\Gamma}{\bPosition{\Block}[\Gamma]}[\bigcbrackets]\) is \(\Block\).
\begin{figure}[H]
\centering
\includegraphics[scale=0.52]{./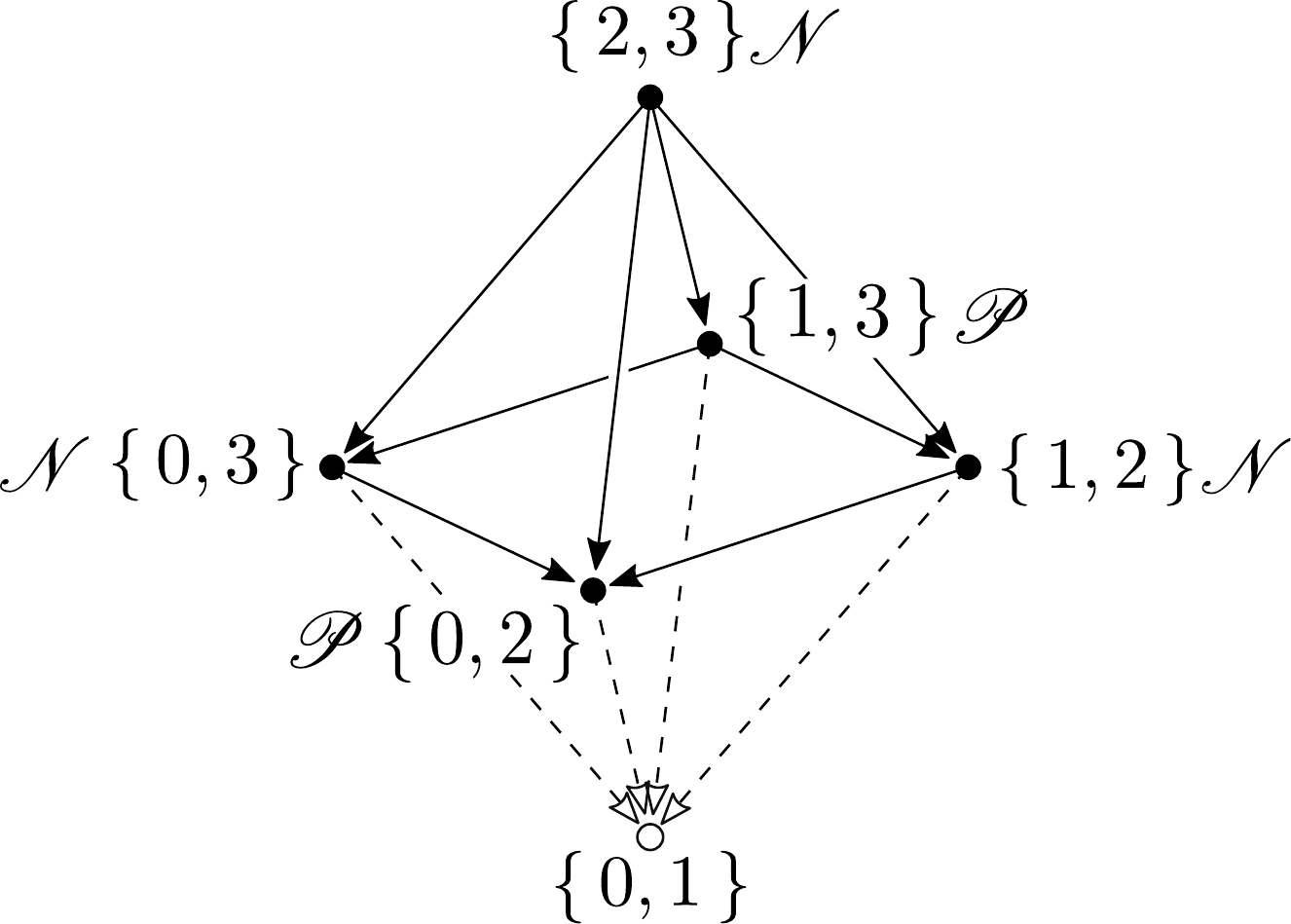}
\caption{\label{fig:org4b96216}\textbf{A game whose \(\sP\)-position set is \(\bigfamily{\set{0, 2}, \set{1, 3}}\). The labels \(\sP\) and \(\sN\) indicate winning positions for the previous and next players, respectively.}}
\end{figure}
\noindent
For example, if \(\set{1, 3}\) is the starting vertex,
then the first player can move the token only to \(\set{0, 3}\) or \(\set{1, 2}\),
so the second player can move it to \(\set{0, 2}\) and win.
Thus \(\set{1, 3}\) is a winning position for the second player (the previous player to move), that is, a \(\sP\)-position.
In the same way, for a given Steiner system, we can construct a game whose \(\sP\)-position set is its block set (Corollary \ref{orgc92db7a}).

In this paper, we investigate the frequency distribution of the numbers of positions of games obtained from a Steiner system.
Let \(\cD\) be an \(\ccS(\cct, \cck, \ccv)\) with point set \([\ccv]\), and let \(\ccN\)
be the index of the automorphism group of \(\cD\) in the symmetric group \(\Sym([\ccv])\) on \([\ccv]\).
Then there are \(\ccN\) Steiner systems \(\ccS(\cct, \cck, \ccv)\) with point set \([\ccv]\) isomorphic to \(\cD\),
and, for each of them, we can construct a game whose \(\sP\)-position set is its block set.
The frequency distribution of the numbers of positions (vertices) of the obtained \(\ccN\) games is called the \emph{game distribution} of \(\cD\).
For example, there are three \(\ccS(1, 2, 4)\) with point set \([4]\), and their block sets are as follows:
\[
 \Block = \bigfamily{\set{0, 2}, \set{1, 3}}, \quad \Block' = \bigfamily{\set{0, 1}, \set{2, 3}}, \quad \Block'' = \bigfamily{\set{0, 3}, \set{1, 2}}.
\]
As we have seen, \(\size{\bPosition{\Block}[\Gamma]} = 5\).
A direct computation shows that
\[
 \msize{\bPosition{\Block'}[\Gamma]} = \size{{[4] \choose 2}} = 6 \quad \tand \quad \msize{\bPosition{\Block''}[\Gamma]} = \size{{[4] \choose 2} \setminus \bigfamily{\set{0, 1}, \set{0, 2}}} = 4.
\]
Therefore the game distribution of an \(\ccS(1, 2, 4)\) is as follows:
\begin{center}
\begin{tabular}{c|ccc|c}
\(\#\) positions & 4 & 5 & 6 & Total\\
\hline
\(\#\) games & 1 & 1 & 1 & 3\\
\end{tabular}
\end{center}

By using game distributions, we present the following two results.
\yoff

First, we characterize the Steiner system \(S(5, 6, 12)\) in the shuffle numbering, \textbf{denoted by \(\shD\)}.
It is well-known that \(S(5, 6, 12)\) is unique up to isomorphism \cite{witt-ueber-1937} and its automorphism group is the sporadic simple Mathieu group \(M_{12}\) \cite{witt-5fach-1937}.
\yon \ 
Since the index of \(\ccM_{12}\) in \(\Sym([12])\) equals 5040, it follows that
there are 5040 \(S(5, 6, 12)\) with point set \([12]\), and our construction produces 5040 games.
Here, \(\shD\) is one of the 5040 \(S(5, 6, 12)\).
The name of the shuffle numbering comes from the fact that the automorphism group of \(\shD\) is
generated by the two permutations \(i \mapsto 11 - i\) and \(\cci \mapsto \min(2 \cci, 23 - 2 \cci)\) of \([12]\), called the Mongean shuffle, 
and \(\shD\) has many interesting properties \cite{conway-sphere-2013}.
In fact, the \(\sP\)-position set of the hexad game is the block set of \(\shD\) \cite{conway-lexicographic-1986, kahane-hexad-2001}.
Conversely, our construction produces the hexad game from \(\shD\).
Surprisingly, this game can be characterized as a unique game with the minimum number of positions among the obtained 5040 games;
this result is easily seen by using the game distribution of an \(\ccS(5, 6, 12)\) (Theorem~\ref{org0f36e55}).
\yoff
In particular, we can characterize \textbf{\(\shD\)} by using games.

Second, we characterize projective Steiner triple systems among all Steiner triple systems.
In this paragraph, we consider non-isomorphic designs unlike in the previous paragraph.
The projective geometry \textbf{\(\text{PG}(d, 2)\)} forms a Steiner system \(S(2, 3, 2^{d + 1} - 1)\), called the \emph{projective Steiner triple system of order} \(2^{d + 1} - 1\) (Example \ref{org0d144dd}).
It is known that there are 80 non-isomorphic Steiner systems \(S(2, 3, 15)\) \cite{cole-complete-1917}, and one of them is projective.
How can we characterize the projective one among them?
In combinatorial design theory, characterizations of particular designs, including projective Steiner triple systems, have been studied (see, for example, \cite{colbourn-triple-1999, dehon-designs-1977, doyen-ranks-1978, veblen-projective-1916, baartmans-binary-1996}).
In this paper, we show that \textbf{the game distribution of an \(S(\cct, \cct + 1, \ccv)\)} can be decomposed into \emph{symmetric components} (Proposition \ref{org3399301}). 
We then prove that an \(S(2, 3, \ccv)\) is projective if and only if its game distribution has a unique symmetric component (Theorem \ref{org1705e9e}).

This paper is organized as follows. In Section \ref{org75d1671}, we recall the concepts of \textbf{a} Steiner system and \textbf{an} impartial game.
Section \ref{org887ee75} contains the construction of games whose \textbf{\(\sP\)-position} set equals the block set of a Steiner system.
In Section \ref{org927a8a4}, we introduce game distributions and then characterize the \(S(5, 6, 12)\) in the shuffle numbering.
In Section \ref{orgee4c4c8}, it is shown that the game distribution of an \(S(\cct, \cct + 1, \ccv)\) can be decomposed into symmetric components.
We finally characterize projective Steiner triple systems by using game distributions in Section \ref{orge91698d}.

\section{Preliminaries}
\label{sec:org2397e72}
\label{org75d1671}
\textbf{We recall some facts about Steiner systems and impartial games that will be used later.}
\subsection{Steiner Systems}
\label{sec:orgbb2fdcd}
\label{orgdf895f0}
\mylink{sec-steiner}Let \(\cct\) be a nonnegative integer and \textbf{let} \(\ccv\), \(\cck\), and \(\lambda\) be positive integers with \(\cct \le \cck \le \ccv\).
A \emph{\(\design{\cct}{\ccv, \cck, \lambda}\)} \(\cD\) is a pair \((\Vertex{\cD}, \Block[\cD])\) where \(\Vertex{\cD}\) is a \textbf{set with \(\size{\Vertex{\cD}} = \ccv\)}
and \(\Block[\cD]\), \textbf{called the \emph{block set} of \(\cD\), is a subset of \({\Vertex{\cD} \choose \cck}\)} such that
every \textbf{\(\ccT \in {\Vertex{\cD} \choose \cct}\)} is contained in exactly \(\lambda\) blocks.
\textbf{Note that a \(\design{\cct}{\ccv, \cck, 1}\) is a Steiner system \(\ccS(\cct, \cck, \ccv)\).}
Throughout this paper, we assume that \(\Vertex{\cD} = [\ccv] = \set{0, 1, \dotsc, \ccv - 1}\) unless otherwise noted.
If \(\cD\) is a \(\design{\cct}{\ccv, \cck, \lambda}\), then it is also an \(\design{\cci}{\ccv, \cck, \lambda_\cci}\) for \(0 \le \cci \le \cct\), where
\begin{equation}
\label{equ:lambdai}
 \lambda_\cci = \frac{{\ccv - \cci \choose \cct - \cci}}{{\cck - \cci \choose \cct - \cci}} \lambda.
\end{equation}
Note that \(\lambda_0\) equals the number of blocks of \(\cD\).

 \begin{example}
 \comment{Exm.}
\label{sec:org4cdd8a1}
Let \(\Block = \bigfamily{\set{0, 1}, \set{2, 3}, \dotsc, \set{2 \ccv - 2, 2 \ccv - 1}}\).
Then \(([2 \ccv], \Block)\) is an \(S(1, 2, 2 \ccv)\).
 
\end{example}

 \begin{example}[\ymod{P}rojective Steiner triple systems]
 \comment{Exm. [\ymod{P}rojective Steiner triple systems]}
\label{sec:orge97801f}
\label{org0d144dd}
Let \(\FF_2\) be the finite field with two elements.
Let 
\[
 \Vertex{} = \FF_2^{\ccd + 1} \setminus \bigfamily{(0, \dotsc, 0)}\  \tand\ \Block = \biggfamily{\set{\ccp, \ccq, \ccp + \ccq} \in {\Vertex{} \choose 3} : \set{\ccp, \ccq} \in {\Vertex{} \choose 2}}.
\]
We see that \((\Vertex{}, \Block)\) is a Steiner system \(S(2, 3, 2^{\ccd + 1} - 1)\).
This design is called the \emph{projective Steiner triple system of order} \(2^{\ccd + 1} - 1\),
\textbf{since \(\Vertex{}\) and \(\Block\) can be regarded as the point and line sets of the projective geometry \(\text{PG}(\ccd, 2)\)}.
For example, if \(\ccd = 2\), then the blocks are as shown in Figure \ref{fig:orgb8b0773}.

\begin{figure}[htbp]
\centering
\includegraphics[scale=0.52]{./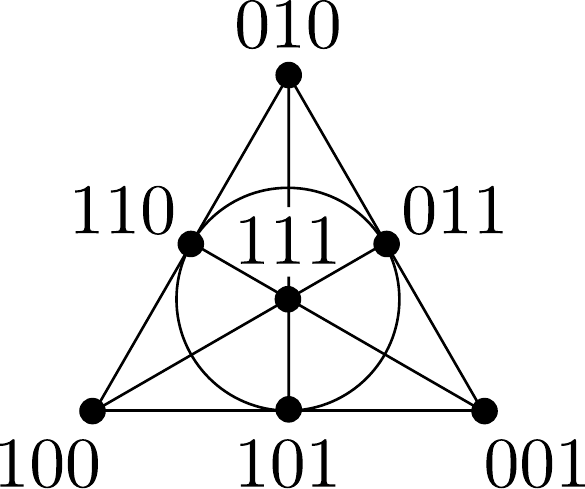}
\caption{\label{fig:orgb8b0773}The blocks of the projective Steiner triple system of order 7. For example, \(\bigfamily{(1, 0, 0), (1, 1, 0), (0, 1, 0)}\) and \(\bigfamily{(1, 1, 0), (1, 0, 1), (0, 1, 1)}\) are blocks.}
\end{figure}
 
\end{example}

\comment{Isomorphisms}
\label{sec:org86140d0}
We next define isomorphisms.
Let \(\cD\) and \(\cD'\) be \(\designs{t}{v, k, \lambda}\).
A bijection \(\phi \colon \Vertex{\cD} \to \Vertex{\cD'}\) is called an \emph{isomorphism} if
\(\phi(\ccB) \in \Block[\cD']\) for every \(\ccB \in \Block[\cD]\), where \(\phi(\ccB) = \set{\phi(\ccb) : \ccb \in \ccB}\).
If there is an isomorphism between \(\cD\) and \(\cD'\), then they are said to be \emph{isomorphic}.
An isomorphism from \(\cD\) to \(\cD\) is called an \emph{automorphism} of \(\cD\).
Let \(\Aut(\cD)\) denote the group of automorphisms of \(\cD\).

 \begin{example}
 \comment{Exm.}
\label{sec:org0b756e2}
\label{org8d07afb}
Let \(\cD\) be the \(S(1, 2, 4)\) with block set \(\bigfamily{\set{0, 1}, \set{2, 3}}\).
The automorphism group of \(\cD\) is generated by the two permutations 
\((0\ 1)\) and \((0\ 2)(1\ 3)\). This group is isomorphic to the dihedral group \(D_8\) of order 8.
It is easy to see that \(S(1, 2, 4)\) is unique up to isomorphism.
In general, \(S(1, 2, 2v)\) is unique up to isomorphism.
 
\end{example}

\begin{remark}
 \comment{Rem.}
\label{sec:org0eb0e9b}
\label{orgde64e0f}
\mylink{p4-21}
A Steiner system \(S(2, 3, \ccv)\) is called a \emph{Steiner triple system} (see \cite{colbourn-triple-1999} for details).
Kirkman \cite{kirkman-problem-1847} showed that
an \(S(2, 3, \ccv)\) exists if and only if \(\ccv \equiv 1, 3 \pmod{6}\).
Although \(S(2, 3, 7)\) and \(S(2, 3, 9)\) are unique up to isomorphism,
\(S(2, 3, \ccv)\) is not unique for \(\ccv > 9\) \cite{depasquale-sui-1899, moore-concerning-1893}. For example,
there are 80 non-isomorphic \(S(2, 3, 15)\) \cite{cole-complete-1917} and one of them is projective.
Let \(\cD\) be an \(S(2, 3, \ccv)\).
By the Veblen-Young theorem \cite{veblen-projective-1916}, the following two conditions are equivalent:
\begin{enumerate}
\item \(\cD\) is projective, that is, it is isomorphic to a projective Steiner triple system.
\item If \(\set{a, b, c}\), \(\set{a, d, e}\), and \(\set{b, d, f}\) are distinct three blocks of \(\cD\), then \(\set{c, e, f}\) is also a block (see Figure \ref{fig:org66c4986}).
\end{enumerate}
\begin{figure}[H]
\centering
\includegraphics[scale=0.52]{./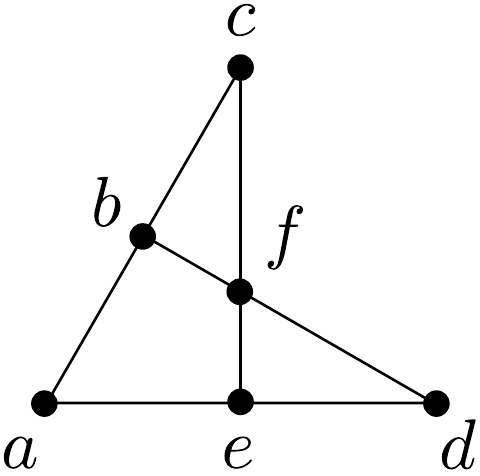}
\caption{\label{fig:org66c4986}A characterization of projective Steiner triple systems.}
\end{figure}
 
\end{remark}

\subsection{Impartial Games}
\label{sec:org5883af2}
\label{org7251d2b}
\mylink{sec-games}We recall the concept of \textbf{an} impartial game (see, for example, \cite{berlekamp-Winning-2001, conway-numbers-2001} for \textbf{more} details).

\yon
We represent a game as a digraph.
Let \(\Gamma\) be a digraph \((\Position{\Gamma}, \Edge{\Gamma})\) satisfying
\begin{equation}
\label{equ:def-game}
 \lg_{\Gamma}(\ccP) < \infty \quad \text{for every\ } \ccP \in \Position{\Gamma},
\end{equation}
\yoff
where \(\lg_{\Gamma}(\ccP)\) is the maximum length of a walk from \(\ccP\).
The vertex set \(\Position{\Gamma}\) is called the \emph{position set} of \(\Gamma\). 
If \((\ccP, \ccQ) \in \Edge{\Gamma}\), then \(\ccQ\) is called an \emph{option} of \(\ccP\).
A position without options is called a \textbf{\emph{terminal position}}.
\yon
Suppose that \(\Gamma\) has at least one position; then \(\Gamma\) has a terminal position.
We consider \(\Gamma\) as a two-player game in the way described in the introduction.
This kind of game is called a (short) impartial game, and every impartial game can be
represented in this manner.
Thus we call a digraph satisfying \eqref{equ:def-game} a \emph{game graph} or simply a \emph{game}.\footnote{\textbf{Although a (short) impartial game is defined as a vertex in a game graph \(\Gamma\), we can consider \(\Gamma\) itself as an impartial game by adding a vertex if necessary (see Remark \ref{org71be464}). Therefore we call \(\Gamma\) simply a game.}}
\yoff

 \begin{example}
 \comment{Exm.}
\label{sec:orgdc4ff6c}
\label{orgb85effe}
Let 
\[
 \Gamma^1 = \bigl([4], \bigfamily{(3, 2), (3, 1), (2, 1), (2, 0), (1, 0)}\bigr)\ \tand\  \Gamma^2 = \bigl([2], \bigfamily{(1, 0), (0, 1)}\bigr).
\]
See Figure \ref{fig:orgec3afb4}.
Then \(\Gamma^1\) is a game and position 0 is a \textbf{terminal} position in \(\Gamma^1\).
However, \(\Gamma^2\) is not a game since the walk \((1, 0, 1, 0, \dotsc)\) has infinite length.

\begin{figure}[htbp]
\centering
\includegraphics[scale=0.52]{./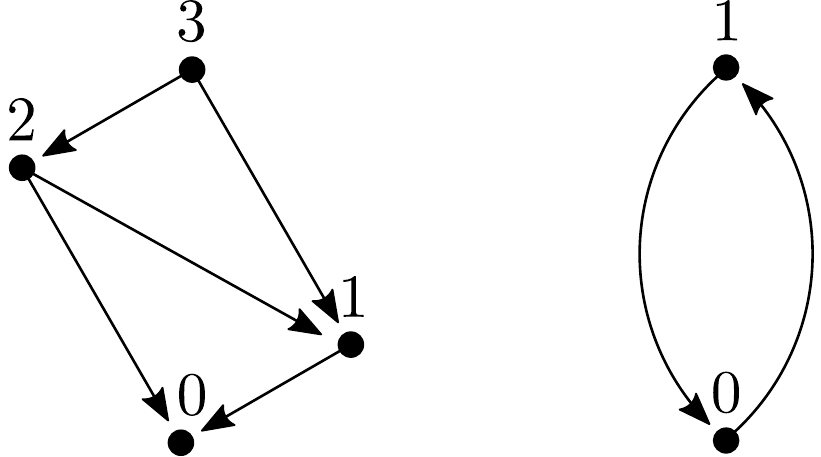}
\caption{\label{fig:orgec3afb4}\(\Gamma^1\) and \(\Gamma^2\).}
\end{figure}
 
\end{example}

 \begin{example}[Welter's game and the hexad game]
 \comment{Exm. [Welter's game and the hexad game]}
\label{sec:org2bbf4fc}
\label{orgb214729}
Let \(\Wel{\ccv}{\cck}\) be the game with position set \({[\ccv] \choose \cck}\) and
edge set
\[
\biggset{(\ccP, \ccP^{(\ccp\ \ccq)}) \in {[\ccv] \choose \cck}^2 : \ccq < \ccp,\ \ccp \in \ccP,\ \ccq \not \in \ccP}.
\]
The game \(\Wel{\ccv}{\cck}\) is called \emph{Welter's game}.
Let
\[
 \Position{\Hexad} = \biggset{\ccP \in {[12] \choose 6} : \sum_{\ccp \in \ccP} \ccp \ge 21}.
\]
\textbf{The subgraph \(\Induced{\Wel{12}{6}}{\Position{\Hexad}}[\bigcbrackets]\) induced in \(\Wel{12}{6}\) by \(\Position{\Hexad}\) is called the \emph{hexad game} and denoted by \(\Hexad\).}
 
\end{example}

\comment{Outcome}
\label{sec:orgaae06db}
We next define \textbf{\(\sP\)-position} sets.
Let \(\sP\) and \(\sN\) be two symbols.
For a game \(\Gamma\) and a position \(\ccP \in \Position{\Gamma}\), 
the \emph{outcome} of \(\ccP\) is defined recursively by
\[
 \cco_{\Gamma}(\ccP) = \begin{cases}
 \sN & \tif \ccP \text{\ has an option\ } \ccQ \text{\ with\ } \cco_{\Gamma}(\ccQ) = \sP, \\
 \sP & \totherwise. \mylink{p6-1}
 \end{cases}
\]
Recall that \(\Gamma\) has a \textbf{terminal} position if it has at least one position.
By definition, the outcome of a \textbf{terminal} position is \(\sP\).
\textbf{An easy induction shows} that the previous player can force a win if and only if the outcome of the starting position is \(\sP\).
Let 
\[
 \Winning{\Gamma} = \bigset{\ccP \in \Position{\Gamma} : \cco_{\Gamma}(\ccP) = \sP}.
\]
The set \(\Winning{\Gamma}\) is called the \textbf{\emph{\(\sP\)-position set} or} \emph{winning position set} of the game \(\Gamma\).
\yon
\mylink{p6-18}Note that \(\Winning{\Gamma}\) is an independent set in \(\Gamma\),
that is, \((\ccB, \ccC) \not \in \Edge{\Gamma}\) for any \(\ccB, \ccC \in \Winning{\Gamma}\).
As we will see in the next section,
our construction, described in the introduction, can be applied to an independent set in \(\Gamma\).
Note that if \(\Block\) is the block set of an \(\ccS(\cct, \cck, \ccv)\) with \(\cct < \cck\),
then \(\Block\) is an independent set in \(\Wel{\ccv}{\cck}\).
\yoff

 \begin{example}
 \comment{Exm.}
\label{sec:orgb70e643}
\label{org8bc2902}
Let \(\Gamma^1\) be as in Example \ref{orgb85effe}.
The outcomes of the four positions are shown in Figure~\ref{fig:org6a95754}.
Note that the previous player can force a win if we start at position 3.
\begin{figure}[H]
\centering
\includegraphics[scale=0.52]{./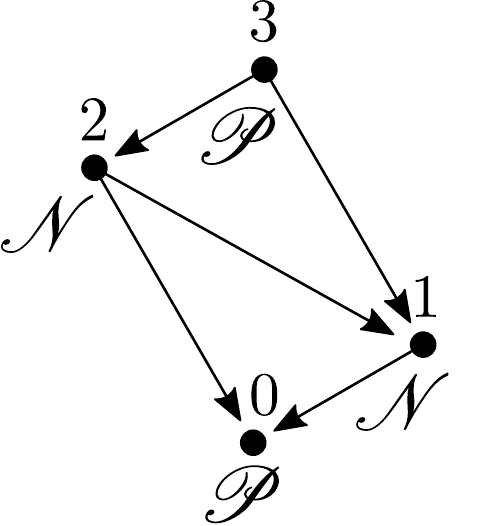}
\caption{\label{fig:org6a95754}Outcomes of positions.}
\end{figure}
 
\end{example}

\begin{remark}
 \comment{Rem.}
\label{sec:org1e5f873}
\label{orge3d702b}
\yon
\mylink{p7-1}Let \(\shD = ([12], \Winning{\Hexad})\). Then
\(\shD\) is an \(\ccS(5, 6, 12)\), called the Steiner system \(\ccS(5, 6, 12)\) in the \emph{shuffle numbering} \cite{conway-lexicographic-1986, kahane-hexad-2001}.
For example, if \(\ccP\) is an element in \({[12] \choose 6}\) with \(\sum_{\ccp \in \ccP} \ccp = 21\),
then it is a terminal position in \(\Hexad\), so \(\ccP \in \Winning{\Hexad}\).
Incidentally, the whole \(\sP\)-positions of \(\Hexad\) can be obtained as the orbit
\[
 \bigset{\set{0, 1, 2, 3, 4, 11}^\pi : \pi \in \ccG},
\]
where \(\set{0, 1, 2, 3, 4, 11}^\pi = \set{0^\pi, 1^\pi, 2^\pi, 3^\pi, 4^\pi, 11^\pi}\) and \(\ccG\) is the group generated by the two permutations \(\cci \mapsto 11 - \cci\) and \(\cci \mapsto \min(2 \cci, 23 - 2 \cci)\) of \([12]\),
which is the automorphism group of \(\shD\).
We can also explicitly describe \(\Winning{\Hexad}\) by using the
following \(3 \times 4\) array, called the MINIMOG in the shuffle numbering (see \cite{conway-sphere-2013}).
\begin{center}
\begin{center}
\includegraphics[width=3cm]{./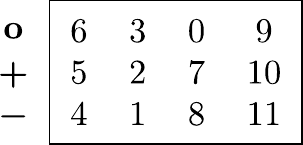}
\label{orgca21130}
\end{center}
\end{center}
\yoff
 
\end{remark}

\section{Construction of Games}
\label{sec:org08f32ea}
\label{org887ee75}
\yon
For an independent set \(\Block\) in a game \(\Gamma\), we construct a game whose \(\sP\)-position set equals \(\Block\).
In particular, our construction works for the block set \(\Block\) of an \(\ccS(\cct, \cck, \ccv)\) with \(\cct < \cck\) since \(\Block\) is an independent set in \(\Wel{\ccv}{\cck}\). 
\yoff

\textbf{For an independent set \(\Block\) in a game \(\Gamma\), define \(\bPosition{\Block}[\Gamma]\) by \eqref{equ:def-bposition}.}
When no confusion can arise, we write \(\bPosition{\Block}\), \(\InNeighbors{\ccP}\), and \(\OutNeighbors{\ccP}\) instead of \(\bPosition{\Block}[\Gamma]\), \(\InNeighbors{\ccP}[\Gamma]\), and \(\OutNeighbors{\ccP}[\Gamma]\), respectively.
\textbf{Let \(\Phi(\Gamma, \Block)\) denote the family of the position sets of induced subgraphs of \(\Gamma\) whose \(\sP\)-position sets equal \(\Block\), that is,} \yon
\[
 \ymodoff{\Phi(\Gamma, \Block) = \bigset{\cQ \subseteq \Position{\Gamma} : \Winning{\Induced{\Gamma}{\cQ}} = \Block}.}
\]
 
 \begin{theorem}
 \comment{Thm.}
\label{sec:org7adaf8e}
\label{orgeab1e99} 
\textbf{If \(\Gamma\) is a game and \(\Block\) is an independent set in \(\Gamma\), then} \yoff
\[
 \Phi(\Gamma, \Block) = \bigset{\cQ : \Block \subseteq \cQ \subseteq \bPosition{\Block}}\ymod{;}
\]
\ymod{i}n particular, \(\Winning{\Induced{\Gamma}{\bPosition{\Block}}} = \Block\) and \(\bPosition{\Block}\) is the maximum element of \(\Phi(\Gamma, \Block)\) with respect to inclusion.
 
\end{theorem}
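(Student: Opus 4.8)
The plan is to prove the single displayed identity $\Phi(\Gamma, \Block) = \bigset{\cQ : \Block \subseteq \cQ \subseteq \bPosition{\Block}}$ by two inclusions; the two ``in particular'' assertions then follow at once, since $\bPosition{\Block}$ itself lies in the right-hand set (as $\Block \subseteq \bPosition{\Block}$ by definition \eqref{equ:def-bposition}), so it belongs to $\Phi(\Gamma,\Block)$, which gives $\Winning{\Induced{\Gamma}{\bPosition{\Block}}} = \Block$ and exhibits $\bPosition{\Block}$ as the $\subseteq$-largest element. Throughout, for $\cQ \subseteq \Position{\Gamma}$ I abbreviate $\Delta = \Induced{\Gamma}{\cQ}$, and I first record that $\Delta$ is again a game: every walk in $\Delta$ is a walk in $\Gamma$, so $\lg_{\Delta}(\ccP) \le \lg_{\Gamma}(\ccP) < \infty$ and the outcome $\cco_{\Delta}$ is well defined.

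For the inclusion $\supseteq$, I fix $\cQ$ with $\Block \subseteq \cQ \subseteq \bPosition{\Block}$ and prove $\Winning{\Delta} = \Block$ by induction on $\lg_{\Delta}(\ccP)$, showing simultaneously that $\cco_{\Delta}(\ccP) = \sP$ for $\ccP \in \Block$ and $\cco_{\Delta}(\ccP) = \sN$ for $\ccP \in \cQ \setminus \Block$. Two facts drive the induction. First, $\Block$ is independent in $\Delta$ (inherited from independence in $\Gamma$), so no option of a vertex of $\Block$ lies in $\Block$. Second, every $\ccP \in \cQ \setminus \Block$ has an option inside $\Block$: since $\ccP \in \bPosition{\Block} \setminus \Block$, there is $\ccB \in \Block$ with $\ccP \in \InNeighbors{\ccB}$, i.e. $(\ccP, \ccB) \in \Edge{\Gamma}$, and as $\ccP, \ccB \in \cQ$ this edge survives in $\Delta$. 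Granting these, a vertex $\ccP \in \Block$ has all of its options in $\cQ \setminus \Block$ with strictly smaller $\lg_{\Delta}$, hence $\sN$ by induction, so $\ccP$ is $\sP$; and a vertex $\ccP \in \cQ \setminus \Block$ has an option $\ccB \in \Block$ with $\lg_{\Delta}(\ccB) < \lg_{\Delta}(\ccP)$, which is $\sP$ by induction, so $\ccP$ is $\sN$.

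For the inclusion $\subseteq$, I take $\cQ \in \Phi(\Gamma,\Block)$, so $\Winning{\Delta} = \Block$. Then $\Block = \Winning{\Delta} \subseteq \Position{\Delta} = \cQ$ gives $\Block \subseteq \cQ$. For $\cQ \subseteq \bPosition{\Block}$, I take any $\ccP \in \cQ$: if $\ccP \in \Block$ there is nothing to prove, and otherwise $\cco_{\Delta}(\ccP) = \sN$, so $\ccP$ has an option $\ccQ$ in $\Delta$ with $\cco_{\Delta}(\ccQ) = \sP$, i.e. $\ccQ \in \Block$; hence $(\ccP, \ccQ) \in \Edge{\Gamma}$ shows $\ccP \in \InNeighbors{\ccQ} \subseteq \bPosition{\Block}$.

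Conceptually the whole statement is just the observation that ``$\Block$ is the $\sP$-position set of $\Delta$'' means exactly ``$\Block$ is a kernel of $\Delta$'' (an independent, absorbing vertex set), which is unique by \eqref{equ:def-game}; I therefore do not expect a genuine obstacle. The only point needing care is the bookkeeping of the simultaneous induction in the $\supseteq$ direction: matching ``$\ccP$ is $\sP$'' with ``$\ccP \in \Block$'' at terminal positions (a terminal vertex of $\cQ \setminus \Block$ cannot occur, because fact two would supply it an option), and verifying at each step that options have strictly smaller $\lg_{\Delta}$ so the induction hypothesis applies.
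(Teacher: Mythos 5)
Your proof is correct and follows essentially the same route as the paper's: the same two inclusions, with the forward one established by induction on $\lg_{\Delta}$ using the two facts that independence forbids options inside $\Block$ and that every element of $\bPosition{\Block} \setminus \Block$ retains an option in $\Block$, and the reverse one read off directly from $\Winning{\Delta} = \Block$. Your added checks (that the induced subgraph is again a game, and the kernel interpretation) are sound refinements the paper leaves implicit, but they do not change the argument.
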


\comment{Connect}
\label{sec:orgf67df3b}
\yoff
\begin{proof}
 \comment{Proof.}
\label{sec:org73f7fb6}
We begin by showing that if \(\cQ\) is a subset of \(\Position{\Gamma}\) with \(\Block \subseteq \cQ \subseteq \bPosition{\Block}\), then \(\cQ \in \Phi(\Gamma, \Block)\).
Let \(\Delta = \Induced{\Gamma}{\cQ}\). It suffices to show that \(\Winning{\Delta} = \Block\). Let \(\ccQ \in \cQ\) and \(\ccl = \lg_{\Delta}(\ccQ)\).
We show that \(\ccQ \in \Winning{\Delta}\) if and only \(\ccQ \in \Block\) by induction on \(\ccl\).
If \(\ccl = 0\), then \(\ccQ \in \Winning{\Delta}\) and \(\ccQ \in \Block\) since \(\ccQ\) is a \textbf{terminal} position of \(\Delta\) and \(\ccQ \in \cQ \subseteq \bPosition{\Block}\).
Suppose that \(\ccl > 0\).
First, we show that if \(\ccQ \in \Block\), then \(\ccQ \in \Winning{\Delta}\). Let \(\ccR\) be an option of \(\ccQ\).
Since \(\Block\) is \textbf{an independent set}, it follows that \(\ccR \not \in \Block\). 
By the induction hypothesis, \(\ccR \not \in \Winning{\Delta}\).
This implies that \(\ccQ \in \Winning{\Delta}\).
Next, we show that if \(\ccQ \not \in \Block\), then \(\ccQ \not \in \Winning{\Delta}\).
Since \(\ccQ \in \cQ \subseteq \bPosition{\Block}\), it follows that \(\ccQ\) has an option \(\ccB\) with \(\ccB \in \Block\).
By the induction hypothesis, \(\ccB \in \Winning{\Delta}\), so \(\ccQ \not \in \Winning{\Delta}\).
Therefore \(\Winning{\Delta} = \Block\).

It remains to prove that if \(\cQ \in \Phi(\Gamma, \Block)\), then \(\Block \subseteq \cQ \subseteq \bPosition{\Block}\).
It is obvious that \(\cQ \supseteq \Block\). We show that \(\cQ \subseteq \bPosition{\Block}\).
Let \(\ccQ \in \cQ\) and \(\Delta = \Induced{\Gamma}{\cQ}\).
If \(\ccQ \in \Block\), then \(\ccQ \in \bPosition{\Block}\).
Suppose that \(\ccQ \not \in \Block\).
Since \(\Block = \Winning{\Delta}\), it follows that \(\ccQ \not \in \Winning{\Delta}\),
and that \(\ccQ\) has an option \(\ccB\) with \(\ccB \in \Winning{\Delta} = \Block\).
This implies that \(\ccQ \in \InNeighbors{\ccB} \subseteq \bPosition{\Block}\).
Therefore \(\cQ \subseteq \bPosition{\Block}\).
\end{proof}

\comment{Connect.}
\label{sec:org78111e5}
For a \(\design{\cct}{\ccv, \cck, \lambda}\) \(\cD\),
let \(\DWel{\cD}\) denote the game \(\Induced{\Wel{\ccv}{\cck}}{\bPosition{\cD}}[\bigcbrackets]\), where \(\bPosition{\cD} = \bPosition{\Block[\cD]}\).
\yon
If \(\Block[\cD]\) is an \textbf{independent set} in \(\Wel{\ccv}{\cck}\),
it follows from Theorem \ref{orgeab1e99} that \(\Winning{\DWel{\cD}} = \Block[\cD]\).
In particular, we obtain the following corollary.
\yoff

 \begin{corollary}
 \comment{Cor.}
\label{sec:orgb5ff674}
\label{orgc92db7a}
If \(\cD\) is an \(S(\cct, \cck, \ccv)\) with \(\cct < \cck\), then 
the \textbf{\(\sP\)-position} set of \(\DWel{\cD}\) equals \(\Block[\cD]\).
 
\end{corollary}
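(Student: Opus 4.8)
The plan is to read Corollary~\ref{orgc92db7a} off directly from Theorem~\ref{orgeab1e99}, applied to the game \(\Gamma = \Wel{\ccv}{\cck}\) and the set \(\Block = \Block[\cD]\). Since \(\DWel{\cD}\) is by definition the induced subgraph \(\Induced{\Wel{\ccv}{\cck}}{\bPosition{\cD}}\) with \(\bPosition{\cD} = \bPosition{\Block[\cD]}\), the desired conclusion \(\Winning{\DWel{\cD}} = \Block[\cD]\) is precisely the ``in particular'' clause \(\Winning{\Induced{\Gamma}{\bPosition{\Block}}} = \Block\) of that theorem. So the entire task reduces to verifying the two hypotheses of Theorem~\ref{orgeab1e99}: that \(\Wel{\ccv}{\cck}\) is genuinely a game, and that \(\Block[\cD]\) is an independent set in it.

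First I would check the finiteness condition~\eqref{equ:def-game} to confirm that \(\Wel{\ccv}{\cck}\) is a game. Each edge has the form \((\ccP, \ccP^{(\ccp\ \ccq)})\) with \(\ccq < \ccp\), so a single move replaces the element \(\ccp \in \ccP\) by the strictly smaller element \(\ccq\); hence the nonnegative integer \(\sum_{\cci \in \ccP} \cci\) strictly decreases along every edge. As this integer cannot decrease infinitely often, \(\lg_{\Wel{\ccv}{\cck}}(\ccP) < \infty\) for every \(\ccP\), and \(\Wel{\ccv}{\cck}\) is a game.

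The independence of \(\Block[\cD]\) is the heart of the argument and the only step I expect to require genuine thought. I would argue by contradiction: suppose some edge \((\ccB, \ccC) \in \Edge{\Wel{\ccv}{\cck}}\) joins two blocks \(\ccB, \ccC \in \Block[\cD]\). By the definition of the edge set, \(\ccC = \ccB^{(\ccp\ \ccq)}\) with \(\ccp \in \ccB\) and \(\ccq \notin \ccB\), so \(\ccB\) and \(\ccC\) are distinct and \(\ccB \cap \ccC = \ccB \setminus \set{\ccp}\) has size \(\cck - 1\). Because \(\cct < \cck\) gives \(\cct \le \cck - 1\), there exists a \(\cct\)-subset \(\ccT \in {\ccB \cap \ccC \choose \cct}\). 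Then \(\ccT\) is contained in the two distinct blocks \(\ccB\) and \(\ccC\), contradicting the defining property of \(S(\cct, \cck, \ccv)\) that every \(\ccT \in {[\ccv] \choose \cct}\) lies in a unique block. Hence no such edge exists and \(\Block[\cD]\) is an independent set in \(\Wel{\ccv}{\cck}\).

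With both hypotheses verified, Theorem~\ref{orgeab1e99} immediately yields \(\Winning{\DWel{\cD}} = \Winning{\Induced{\Wel{\ccv}{\cck}}{\bPosition{\Block[\cD]}}} = \Block[\cD]\). The one subtlety worth flagging is that the independence step genuinely uses \(\cct < \cck\): if \(\cct = \cck\), two distinct blocks meeting in \(\cck - 1 = \cct - 1\) points would share no common \(\cct\)-subset, and the contradiction would collapse.
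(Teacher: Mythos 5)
Your proposal is correct and follows exactly the paper's route: the paper likewise obtains the corollary by applying Theorem~\ref{orgeab1e99} to \(\Gamma = \Wel{\ccv}{\cck}\), having noted (in Section~\ref{org7251d2b}, without spelling out the argument) that the block set of an \(S(\cct, \cck, \ccv)\) with \(\cct < \cck\) is an independent set in \(\Wel{\ccv}{\cck}\). Your write-up merely makes explicit the two details the paper leaves implicit---the finiteness condition \eqref{equ:def-game} via the decreasing sum \(\sum_{\cci \in \ccP} \cci\), and the independence argument via a common \(\cct\)-subset of \(\ccB \cap \ccC\) of size \(\cck - 1 \ge \cct\)---both of which are correct.
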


 \begin{example}
 \comment{Exm.}
\label{sec:orgc644fdc}
\label{org39cf05f}
\textbf{Let \(\cD\) be the \(\ccS(1, 2, 4)\) with block set \(\bigfamily{\set{0, 2}, \set{1, 3}}\).}
\textbf{As we have seen in the introduction, \(\bPosition{\cD} = {[4] \choose 2} \setminus \bigfamily{\set{0, 1}}\) and \(\Winning{\DWel{\cD}} = \bigfamily{\set{0, 2}, \set{1, 3}}\).}
 
\end{example}

 \begin{example}
 \comment{Exm.}
\label{sec:orgbec1df9}
\label{org7d0242b}
\yon
The game \(\DWel{\shD}\) equals \(\Hexad\).
To verify this, it suffices to show that \(\bPosition{\shD} = \Position{\Hexad}\).
Since \(\Winning{\Hexad}\) equals the block set \(\shBlock\) of \(\shD\),
it follows from Theorem \ref{orgeab1e99} that \(\Position{\Hexad} \subseteq \bPosition{\shD}\).
\yoff
We show that \(\bPosition{\ymod{\shD}} \subseteq \Position{\Hexad}\).
If \(B \in \shBlock\) \ymod{(}$= \Winning{\Hexad}$\ymod{)}, then \(B \in \Position{\Hexad}\), so \(\displaystyle \sum_{b \in B} b \ge 21\).
This implies that if \(\ccP \in \bPosition{\ymod{\shD}}\), then \(\displaystyle \sum_{p \in \ccP} p \ge 21\).
Thus \(\bPosition{\ymod{\shD}} \subseteq \Position{\Hexad}\). Therefore \(\bPosition{\ymod{\shD}} = \Position{\Hexad}\).
 
\end{example}

\begin{remark}
 \comment{Rem.}
\label{sec:org1ca857f}
\label{org94c567d}
\mylink{p9-22}\mylink{p8-f}
In this paper, we will investigate \textbf{an} \(S(t, t + 1, v)\) by using the above games obtained from Welter's game.
Incidentally, we can also consider games obtained from other than Welter's game to study \(S(t, t + m, v)\), especially when \(\ccm > 1\).\footnote{\textbf{If \(\cD\) is an \(\ccS(\cct, \cct + \ccm, \ccv)\) with \(\ccm > 1\), then we can show that the number of positions in \(\DWel{\cD}\) always equals \(\lambda_0 + \bigl(\lambda_0\cck(\ccv - \cck)/2\bigr)\) by Lemma \ref{org0f0d991}. Thus to investigate \(\cD\) by using its game distribution, we have to consider other games, for example, \(\Induced{\Gamma}{\bPosition{\Block}[\Gamma]}[\bigcbrackets]\) described in Remark \ref{org94c567d}.}}
For example, let \(\cck = \cct + \ccm\) and let \(\Wel{\ccv}{\cck}^m\) be the game with position set \({[v] \choose k}\) and edge set
\[
 \biggset{(P, Q) \in {[v] \choose k}^2 : P \neq Q, \text{\ there is a walk from\ } P \text{\ to } Q \text{\ of length at most\ } m \text{\ in\ } \Wel{\ccv}{\cck} }.
\]
For example, \(\bigl(\set{1, 4, 5}, \set{0, 2, 5}\bigr)\) is an edge in \(\Gamma_{6, 3}^2\) since \(\bigl(\set{1, 4, 5}, \set{0, 4, 5}, \set{0, 2, 5}\bigr)\) is a walk of length 2 in \(\Wel{6}{3}\).
Note that \(\Wel{\ccv}{\cck}^1\) is Welter's game \(\Wel{\ccv}{\cck}\). Let \(\Gamma = \Wel{\ccv}{\cck}^m\) and let \(\cD\) be an \(S(t, \cck, \ccv)\) with block set \(\Block\). 
Then we can see that \(\Block\) is \textbf{an independent set} in \(\Gamma\). 
Therefore the \textbf{\(\sP\)-position} set of \(\Induced{\Gamma}{\bPosition{\Block}[\Gamma]}[\bigcbrackets]\) is equal to \(\Block\).
 
\end{remark}

\comment{connect}
\label{sec:org6e73aaa}
\yon
\begin{remark}
 \comment{Rem.}
\label{sec:orgf59e17a}
\label{org71be464}
\mylink{p8-r14}As we have mentioned, a game graph itself can be considered as a (short or long) impartial game 
by adding a new vertex described below if necessary; our construction also works after adding the new vertex.
Let \(\Gamma\) be a game graph with at least one vertex.
If \(\ccP\) and \(\ccQ\) are two vertices in \(\Gamma\), then \(\ccQ\) is called a \emph{descendant} of \(\ccP\) if there is a path from \(\ccP\) to \(\ccQ\).
A \emph{short impartial game} can be defined as a vertex \(\ccP\) in \(\Gamma\) and a \emph{position} in \(\ccP\) as a descendant of \(\ccP\).
Thus if \(\Gamma\) has a vertex \(\ccP\) such that the set of descendants of \(\ccP\) equals \(\Position{\Gamma}\), 
then we can identify \(\Gamma\) with \(\ccP\), so \(\Gamma\) can be considered as a short impartial game.
For example, if \(\Gamma\) is Welter's game \(\Wel{\ccv}{\cck}\), then
the vertex \(\set{\ccv - \cck, \ccv - \cck + 1, \dotsc, \ccv - 1}\) satisfies the above condition.
Even when \(\Gamma\) does not have such a vertex,
we can add a new vertex satisfying this condition to \(\Gamma\) while preserving its \(\sP\)-positions as follows.
Let \(\ccP_{\infty}\) be a new vertex and let \(\tilde{\Gamma}\) be a digraph
with vertex set \(\Position{\Gamma} \cup \set{\ccP_\infty}\) and edge set
\[
 \Edge{\Gamma} \cup \bigset{(\ccP_\infty, \ccP) : \ccP \in \Position{\Gamma}}.
\]
Then we can identify \(\tilde{\Gamma}\) with \(\ccP_\infty\).
Note that if \(\bigset{\lg_{\Gamma}(\ccP) : \ccP \in \Position{\Gamma}} = \NN\),
then \(\lg_{\tilde{\Gamma}}(\ccP_\infty)\) is not finite, which equals the first infinite ordinal \(\omega\), 
and hence \(\tilde{\Gamma}\) does not satisfy \eqref{equ:def-game}.
Such \(\tilde{\Gamma}\) is called a long impartial game.
Even if \(\tilde{\Gamma}\) is long, our construction works also when we use \(\tilde{\Gamma}\) instead of \(\Gamma\) because
\(\ccP_\infty\) is an \(\sN\)-position and \(\Winning{\tilde{\Gamma}} = \Winning{\Gamma}\).
 
\end{remark}

\comment{connect}
\label{sec:org91f3137}
\yoff

\section{Game Distributions}
\label{sec:org8a17fb3}
\label{org927a8a4}

We introduce the game distribution of a Steiner system in this section.
The Steiner system \(\shD\) will be characterized by using its game distribution.
Hereafter, for \(\ccP \in {[\ccv] \choose \cck}\), let
\[
 \InNeighbors{\ccP} = \InNeighbors{\ccP}[\Wel{\ccv}{\cck}] \tand \OutNeighbors{\ccP} = \OutNeighbors{\ccP}[\Wel{\ccv}{\cck}].
\]

For a \(\design{\cct}{\ccv, \cck, \lambda}\ \cD\) and a permutation \(\pi\) in \(\Sym([\ccv])\), let \(\cD^\pi\) denote the design obtained from \(\cD\) by applying \(\pi\), that is, \(\cD^\pi = \bigl([\ccv], \Block[\cD]^\pi\bigr)\), where 
\(\Block[\cD]^\pi = \bigset{\ccB^\pi : \ccB \in \Block[\cD]}\) and \(\ccB^\pi = \bigset{\ccb^\pi : \ccb \in \ccB}\).
Define
\[
 \Orbit{\cD} = \bigset{\cD^\pi  : \pi \in \Sym([\ccv])}.
\]
For example, if \(\cD = \bigl([4], \bigfamily{\set{0, 2}, \set{1, 3}}\bigr)\), then
\begin{align*}
 \Orbit{\cD} &= \bigset{\cD, \cD^{(1\ 2)}, \cD^{(2\ 3)}} \\
 &= \Bigset{\bigl([4], \bigfamily{\set{0, 2}, \set{1, 3}}\bigr),\ \bigl([4], \bigfamily{\set{0, 1}, \set{2, 3}}\bigr),\ \bigl([4], \bigfamily{\set{0, 3}, \set{1, 2}}\bigr)}.
\end{align*}
Note that
\[
 \size{\Orbit{\cD}} = \frac{\size{\Sym([\ccv])}}{\size{\Aut(\cD)}} = \frac{\ccv!}{\size{\Aut(\cD)}}.
\]
Let \(\Freq{\cD}(\ccn)\) denote the number of designs \(\cD' \in \Orbit{\cD}\) such that
the corresponding game \(\DWel{\cD'}\) has \(\ccn\) positions, that is,
\[
 \Freq{\cD}(\ccn) = \size{\Set{\cD' \in \Orbit{\cD} : \msize{\bPosition{\cD'}} = \ccn}}.
\]
The function \(\Freq{\cD}\) will be called the \emph{game distribution} of \(\cD\) \textbf{(\emph{with respect to \(\Wel{\ccv}{\cck}\)})}.

 \begin{example}
 \comment{Exm.}
\label{sec:orga16fd6f}
\label{orgc7f5bd0}
Let \(\cD\) be \textbf{an} \(\ccS(1, 2, 4)\). 
\textbf{As we have seen in the introduction, the game distribution of \(\cD\)} is as follows:
\begin{center}
\begin{tabular}{c|ccc|c}
\(\ccn\) & 4 & 5 & 6 & Total\\
\hline
\(\Freq{\cD}(\ccn)\) & 1 & 1 & 1 & 3\\
\end{tabular}
\end{center}
 
\end{example}

 \begin{example}
 \comment{Exm.}
\label{sec:org934f2c7}
\label{orgd9071c5}
Let \(\cD\) be an \(S(2, 3, 7)\).
Then the game distribution of \(\cD\) is as follows:
\mylink{p10-20}
\begin{center}
\begin{tabular}{c|cccccccc|c}
\(\ccn\) & 28 & 29 & 30 & 31 & 32 & 33 & 34 & 35 & Total\\
\hline
\(\Freq{\cD}(\ccn)\) & 1 & 3 & 5 & 6 & 6 & 5 & 3 & 1 & 30\\
\end{tabular}
\end{center}
\mylink{p10-21}
Note that this distribution is symmetric, that is, \(\Freq{\cD}(\ccn) = \Freq{\cD}(63 - \ccn)\). 
Incidentally, the game distribution of a Steiner system is not always symmetric. 
For example, if \(\cD\) is an \(S(2, 3, 9)\), then its game distribution is as follows:
{\tabcolsep = 1mm
\begin{center}
\begin{tabular}{c|ccccccccccccc|c}
\(\ccn\) & 68 & 69 & 70 & 71 & 72 & 73 & 74 & 75 & 76 & 77 & 78 & 79 & 80 & Total\\
\hline
\(\Freq{\cD}(\ccn)\) & 1 & 6 & 16 & 36 & 77 & 94 & 116 & 129 & 131 & 104 & 74 & 39 & 17 & 840\\
\end{tabular}
\end{center}
}
 
\end{example}

 \begin{theorem}
 \comment{Thm.}
\label{sec:org8ff3979}
\label{org0f36e55}
Let \(\cD\) be an \(S(5, 6, 12)\).
The game distribution of \(\cD\) is as follows:
{\tabcolsep = 1mm
\begin{center}
\begin{tabular}{c|cccccccccccc|c}
\(\ccn\) & \(905\) & \(906\) & \(907\) & \(908\) & \(909\) & \(910\) & \(911\) & \(912\) & \(913\) & \(914\) & \(915\) & \(916\) & \(\mytext{Total}\)\\
\hline
\(\Freq{\cD}(\ccn)\) & \(1\) & \(10\) & \(42\) & \(150\) & \(351\) & \(650\) & \(1012\) & \(1237\) & \(939\) & \(532\) & \(115\) & \(1\) & \(5040\)\\
\end{tabular}
\end{center}
}
\noindent
Moreover, \(\DWel{\shD}\) is the hexad game and is the unique minimum game, that is, \(\size{\bPosition{\shD}} = 905\).
 
\end{theorem}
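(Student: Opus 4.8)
The plan is to reduce the computation of $\msize{\bPosition{\cD'}}$ to a single combinatorial count, carry out the resulting finite enumeration to obtain the table, and then read off the extremal statement, identifying the minimizer with the hexad game through Example~\ref{org7d0242b}.

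First I would record two elementary facts about $\Wel{12}{6}$. Since $\Block[\cD']$ is an independent set in $\Wel{12}{6}$, no block is an in-neighbour of another, so the union in \eqref{equ:def-bposition} is disjoint from $\Block[\cD']$, and since $\ccQ \in \InNeighbors{\ccB}$ holds exactly when $\ccB \in \OutNeighbors{\ccQ}$,
\[
 \msize{\bPosition{\cD'}} = \size{\Block[\cD']} + \biggl| \bigcup_{\ccB \in \Block[\cD']} \InNeighbors{\ccB} \biggr| = 132 + \biggl| \Bigset{\ccQ \in {[12] \choose 6} : \OutNeighbors{\ccQ} \cap \Block[\cD'] \neq \emptyset} \biggr|.
\]
Writing $s(\ccP) = \sum_{p \in \ccP} p$, a direct count of the single-element increasing (resp.\ decreasing) moves gives $\size{\InNeighbors{\ccB}} = 51 - s(\ccB)$ and $\size{\OutNeighbors{\ccQ}} = s(\ccQ) - 15$. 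Summing the first over all $132$ blocks and using that every point of an $\ccS(5,6,12)$ lies in $\lambda_1 = 66$ blocks yields the invariant $\sum_{\ccB} \size{\InNeighbors{\ccB}} = 51 \cdot 132 - 66 \cdot 66 = 2376$, the same for every $\cD' \in \Orbit{\cD}$. Hence the first moment is fixed, and the entire variation of $\msize{\bPosition{\cD'}}$ comes from the overlaps of the sets $\InNeighbors{\ccB}$, i.e.\ from the count $\size{\Set{\ccQ : \OutNeighbors{\ccQ} \cap \Block[\cD'] \neq \emptyset}}$.

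Next I would exploit that $\ccS(5,6,12)$ is unique up to isomorphism with $\Aut(\cD) = M_{12}$ of index $5040$ in $\Sym([12])$, so $\Orbit{\cD}$ consists of exactly the $5040$ images $\cD^\pi$, which I parametrize by coset representatives of $\Aut(\cD)$. For each representative I evaluate the count displayed above; tabulating the $5040$ resulting values of $\msize{\bPosition{\cD^\pi}}$ produces the frequency function $\Freq{\cD}$, i.e.\ the table in the statement. For the extremal claim, Example~\ref{org7d0242b} gives $\DWel{\shD} = \Hexad$, so $\bPosition{\shD} = \Position{\Hexad} = \bigset{\ccP : s(\ccP) \ge 21}$; its complement is $\bigset{\ccP : s(\ccP) \le 20}$, and the bijection $\set{a_1 < \dots < a_6} \mapsto (a_i - (i-1))_i$ identifies $s(\ccP) - 15$ with a partition into at most $6$ parts, so $\size{\set{s \le 20}} = p(0) + \dots + p(5) = 1+1+2+3+5+7 = 19$. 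Therefore $\msize{\bPosition{\shD}} = \binom{12}{6} - 19 = 924 - 19 = 905$, so the value $905$ is attained; since the table gives $\Freq{\cD}(905) = 1$ with no smaller value occurring, $\shD$ is the unique minimizer and $\DWel{\shD} = \Hexad$ is the unique minimum game.

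The \textbf{main obstacle} is establishing all twelve frequencies rigorously. The fixed first moment $2376$ reduces the problem to controlling the higher overlaps $\size{\InNeighbors{\ccB} \cap \InNeighbors{\ccB'}}$ and their iterates, which depend on how pairs and triples of blocks of $\cD'$ sit relative to the linear order on $[12]$, summed over all $5040$ designs; a computer enumeration over the $M_{12}$-cosets settles this directly. A purely human-checkable proof of the minimality and uniqueness alone would still require the sharp lower bound $\size{\Set{\ccQ : \OutNeighbors{\ccQ} \cap \Block[\cD'] \neq \emptyset}} \ge 773$ with equality only for $\shD$, and pinning down this extremal configuration combinatorially is the delicate point.
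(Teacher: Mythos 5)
Your proposal takes essentially the same approach as the paper: the paper's entire proof is that the table (and hence the extremal claim) ``can be obtained by computer,'' i.e.\ precisely the enumeration of \(\msize{\bPosition{\cD^\pi}}\) over the 5040 cosets of \(\ccM_{12}\) in \(\Sym([12])\) that you describe, combined with Example~\ref{org7d0242b} identifying \(\DWel{\shD}\) with \(\Hexad\). Your supplementary hand computations are correct and slightly exceed what the paper records --- the invariant \(\sum_{\ccB} \size{\InNeighbors{\ccB}} = 51 \cdot 132 - 66 \cdot 66 = 2376\), and the partition count giving \(\size{\bPosition{\shD}} = 924 - (1+1+2+3+5+7) = 905\) --- while you rightly note that the twelve frequencies themselves, including uniqueness of the minimizer, rest on the machine enumeration, exactly as in the paper.
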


\comment{Connect}
\label{sec:orgfd4981b}
Theorem \ref{org0f36e55} can be obtained by computer.

\begin{remark}
 \comment{Rem.}
\label{sec:org1c8d726}
Let \(\cD\) be the unique \(S(5, 6, 12)\) with \(\size{\bPosition{\cD}} = 916\).
Then 
\begin{align*}
 {[12] \choose 6} \setminus \bPosition{\cD} = \{&\set{0, 1, 2, 3, 8, 9}, \set{0, 1, 2, 4, 7, 9}, \set{0, 1, 2, 4, 7, 10}, \set{0, 1, 2, 5, 7, 8}, \\
 &\set{0, 1, 3, 5, 6, 8}, \set{0, 1, 3, 5, 7, 9}, \set{0, 1, 4, 5, 6, 7}, \set{0, 2, 3, 5, 6, 8}\}
\end{align*}
and the block set \(\Block[\cD]\) can be obtained from the following MINIMOG:
\begin{center}
\begin{center}
\includegraphics[width=3cm]{./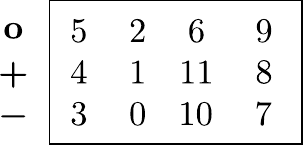}
\end{center}
\end{center}
 
\end{remark}

\comment{Connect}
\label{sec:org449476b}
\yon
\begin{remark}
 \comment{Rem.}
\label{sec:orgc1d5ea7}
\label{orgb6d5185}
We close this section by remarking that the game distribution of \(\ccS(1, 2, 2 \ccv)\) can be calculated as follows.
\yoff
If \(\cD\) is an \(S(1, 2, 2v)\), then
\begin{equation}
\label{equ:exm:1-2-2v-formula}
  \sum_{n = 0}^\infty  \Freq{\cD}(\ccn) \ccx^{\ccn} = \ccx^{v^2} \prod_{i = 1}^{\ccv} \frac{\ccx^{2i - 1} - 1}{\ccx - 1}.
\end{equation}
\textbf{For example, if \(\cD\) is an \(\ccS(1, 2, 6)\), then its game distribution and \(\sum \Freq{\cD}(\ccn) \ccx^\ccn\) are as follows:}
\begin{center}
\begin{tabular}{c|ccccccc|c}
\(\ccn\) & 9 & 10 & 11 & 12 & 13 & 14 & 15 & Total\\
\hline
\(\Freq{\cD}(\ccn)\) & 1 & 2 & 3 & 3 & 3 & 2 & 1 & 15\\
\end{tabular}
\end{center}
\[
 \sum_{\ccn = 0}^\infty \Freq{\cD}(\ccn) \ccx^n = \ccx^9 (\ccx^2 + \ccx + 1)(\ccx^4 + \ccx^3 + \ccx^2 + \ccx + 1).
\]
\textbf{We sketch the proof.}
Let \(\Freq~{\cD}(\ccn)\) denote the number of designs \(\cD' \in \Orbit{\cD}\) such that
the corresponding game \(\DWel{\cD'}\) has \(\ccn\) non-positions, that is,
\[
 \Freq~{\cD}(\ccn) = \size{\Set{\cD' \in \Orbit{\cD} : \msize{\bPosition~{\cD'}} = \ccn}},
\]
\textbf{where \(\bPosition~{\cD} = {[2 \ccv] \choose 2} \setminus \bPosition{\cD}\)}.
Since \(\bigsize{\bPosition~{\cD}} + \bigsize{\bPosition{\cD}} = \binom{2v}{2} = \ccv(2\ccv - 1)\), \textbf{it follows} that \(\Freq~{\cD}(\ccn) = \Freq{\cD}(\ccv(2 \ccv - 1) - \ccn)\).
We \textbf{first} show that
\begin{equation}
\label{equ:exm:1-2-2v}
 \ymod{\sum_{n = 0}^\infty  \Freq~{\cD}(\ccn) \ccx^{\ccn}}  =  \prod_{i = 1}^{\ccv} \frac{\ccx^{2i - 1} - 1}{\ccx - 1}.
\end{equation}
\textbf{Let \(\Psi_{\ccv}(\ccx)\) be the left-hand side of \eqref{equ:exm:1-2-2v}.}
\textbf{We show \eqref{equ:exm:1-2-2v}} by induction on \(\ccv\). 
If \(\ccv = 1\), then \(\Freq~{\cD}(0) = 1\) and \(\Freq~{\cD}(\ccn) = 0\) for \(\ccn \ge 1\), so \eqref{equ:exm:1-2-2v} holds.
\yon
Suppose that \(\ccv \ge 2\), and let \(\set{0, \ccw} \in \Block[\cD]\).
We calculate \(\msize{\bPosition~{\cD}}\).
The number of \(\ccP \in \bPosition~{\cD}\) with \(\ccP \cap \set{0, \ccw} \neq \emptyset\)
equals \(\ccw - 1\) since the set of such \(\ccP\) is \(\bigfamily{\set{0, 1}, \dotsc, \set{0, \ccw - 1}}\).
Moreover, the number of \(\ccP \in \bPosition~{\cD}\) with \(\ccP \cap \set{0, \ccw} = \emptyset\) (i.e., \(\ccP \subseteq [2 \ccv] \setminus \set{0, \ccw}\))
equals \(\msize{\bPosition~{\cE}}\), where \(\cE\) is the \(\ccS(1, 2, 2 \ccv - 2)\) described below.
Write \([2 \ccv] \setminus \set{0, \ccw} = \set{\cca_0, \dotsc, \cca_{2 \ccv - 3}}\) with \(\cca_0 < \dotsb < \cca_{2 \ccv - 3}\),
and let \(\phi\) be the bijection \([2v] \setminus \set{0, \ccw} \ni \cca_\cci \mapsto \cci \in [2 \ccv - 2]\). 
Note that \(\phi\) is order-preserving, that is, \(\phi(\cca) < \phi(\ccb)\) if and only if \(\cca < \ccb\).
Let \(\cE = \bigl([2 \ccv - 2], \bigset{\phi(\ccB) : \ccB \in \Block[\cD], \ccB \neq \set{0, \ccw}}\bigr)\).
Then \(\cE\) is an \(\ccS(1, 2, 2 \ccv - 2)\).
If \(\ccQ \subseteq [2 \ccv] \setminus \set{0, \ccw}\),
then \(\ccQ \in \bPosition~{\cD}\) if and only if \(\phi(\ccQ) \in \bPosition~{\cE}\)  since \(\phi\) is order-preserving.
Therefore the number of \(\ccP \in \bPosition~{\cD}\) with \(\ccP \subseteq [2 \ccv] \setminus \set{0, \ccw}\)
is equal to \(\msize{\bPosition~{\cE}}\). Thus \(\msize{\bPosition~{\cD}} = \ccw - 1 + \msize{\bPosition~{\cE}}\).
\yoff
It follows from the induction hypothesis that
\[
 \Psi_{\ccv}(\ccx) = (1 + \ccx + \dotsb + \ccx^{2\ccv - 2}) \Psi_{\ccv - 1}(\ccx) = \prod_{i = 1}^{\ccv}\frac{\ccx^{2i - 1} - 1}{\ccx - 1}.
\]
Therefore \eqref{equ:exm:1-2-2v} holds.
\textbf{Since \(\Freq{\cD}(\ccn) = 0\) for \(\ccn > \ccv(2 \ccv - 1)\) and} \(\Freq{\cD}(\ccn) = \Freq~{\cD}(\ccv(2\ccv - 1) - \ccn)\),
we see that
\begin{align*}
 \sum_{n = 0}^{\infty}  \Freq{\cD}(\ccn) \ccx^{\ccn} &= \sum_{n = 0}^{\ccv(2 \ccv - 1)}  \Freq~{\cD}(\ccv (2 \ccv - 1) - \ccn) \ccx^{\ccn}  = \sum_{n = 0}^{\ccv(2 \ccv - 1)}  \Freq~{\cD}(\ccn) \ccx^{\ccv (2 \ccv - 1) - \ccn}  \\
 &= \ccx^{\ccv(2 \ccv - 1)} \prod_{i = 1}^{\ccv} \ymod{(1 + x^{-1} + \dotsb + x^{-2i + 2})} = \ccx^{\ccv^2} \prod_{i = 1}^{\ccv}\frac{\ccx^{2i - 1} - 1}{\ccx - 1}.
\end{align*}
 
\end{remark}

\comment{connect}
\label{sec:orgab691cc}
\yoff

\section{Symmetric Components}
\label{sec:org77e62c4}
\label{orgee4c4c8}
\mylink{p13-6}
We show that the game distribution of an \(S(\cct, \cct + 1, \ccv)\) can be decomposed into \emph{symmetric components}. 
Using symmetric components, we characterize projective Steiner triple systems in Section \ref{orge91698d}.

For a \(\design{\cct}{\ccv, \cck, \lambda}\) \(\cD\) and a non-block \(\ccP \in {[\ccv] \choose k} \setminus \Block[\cD]\), a block \(\ccB \in \Block[\cD]\) is called an \emph{out-block} of \(\ccP\) if \(\ccB \in \OutNeighbors{\ccP}\).
Define \(\cca_i(\cD)\) to be the number of non-blocks with \(i\) out-blocks, that is,
\[
 \cca_i(\cD) = \size{\Set{\ccP \in {[\ccv] \choose \cck} \setminus \Block[\cD]: \bigsize{\OutNeighbors{\ccP} \cap \Block[\cD]} = i}}.
\]
For example, if \(\cD = \bigl([4], \bigfamily{\set{0, 2}, \set{1, 3}}\bigr)\), then \(\set{0, 1}\) has no out-blocks, \(\set{0, 3}\) and \(\set{1, 2}\) has an out-block \(\set{0, 2}\), and \(\set{2, 3}\) has two out-blocks \(\set{0, 2}\) and \(\set{1, 3}\), so
\[
\cca_0(\cD) = \size{\bigfamily{\set{0, 1}}} = 1,\ \cca_1(\cD) = \size{\bigfamily{\set{0, 3}, \set{1, 2}}} = 2,\ \cca_2(\cD) = \size{\bigfamily{\set{2, 3}}} = 1.
\]
Note that \(\size{\bPosition{\cD}} = \size{\Block[\cD]} + \sum_{i \ge 1} \cca_i(\cD)\) and \(\msize{\bPosition~{\cD}} = \cca_0(\cD)\),
\textbf{where \(\bPosition~{\cD} = {[\ccv] \choose \cck} \setminus \bPosition{\cD}\)}.
\yon
Moreover, if \(\Block[\cD]\) is an independent set in \(\Wel{\ccv}{\cck}\), then \(\cca_{\cci}(\cD) = 0\) for \(\cci \ge \cck + 1\).
Indeed, if \(\size{\OutNeighbors{\ccP} \cap \Block[\cD]} \ge \cck + 1\) for some \(\ccP \in {[\ccv] \choose \cck} \setminus \Block[\cD]\),
then \(\ccP^{(\ccp\ \ccq)}, \ccP^{(\ccp\ \ccq')} \in \Block[\cD]\) for some \(\ccp \in \ccP\) and some \(\ccq, \ccq' \not \in \ccP\) with \(\ccq' < \ccq\), 
so \(\Block[\cD]\) is not an independent set because \((\ccP^{(\ccp\ \ccq)}, \ccP^{(\ccp\ \ccq')})\) is an edge of \(\Wel{\ccv}{\cck}\).
\yoff
Let
\[
 \ccs(\cD) = \Set{\cca_0(\cD') + \cca_{\cck}(\cD') : \cD' \in \Orbit{\cD}}
\]
and
\[
 \Orbit{\cD}[\alpha] = \Set{\cD' \in \Orbit{\cD} : \cca_0(\cD') + \cca_{\cck}(\cD') = \alpha}
\]
for \(\alpha \in \ccs(\cD)\).
Define
\[
 \Freq{\cD}(\ccn)[\alpha] = \size{\Set{\cD' \in \Orbit{\cD}[\alpha] :  \msize{\bPosition{\cD'}} = \ccn}}
\]
and
\begin{align*}
 \Freq~{\cD}(\ccn)[\alpha] &= \size{\Set{\cD' \in \Orbit{\cD}[\alpha] :  \msize{\bPosition~{\cD'}} = \ccn}} \\
 &=  \bigsize{\bigset{\cD' \in \Orbit{\cD}[\alpha] :  \cca_0(\cD') = \ccn}}. \mylink{p13-25}
\end{align*}
By definition,
\[
 \Freq{\cD}(\ccn) = \sum_{\alpha \in s(\cD)} \Freq{\cD}(\ccn)[\alpha].
\]

Let \(\cD\) be an \(S(\cct, \cct + 1, \ccv)\).
Then the following proposition says that \(\Freq{\cD}(\ccn)[\alpha]\) is symmetric.
From this, the function \(\Freq{\cD}[\alpha]\) is called a \emph{symmetric component} of the game distribution of \(\cD\).

 \begin{proposition}
 \comment{Prop.}
\label{sec:orga300eb6}
\label{org3399301}
If \(\cD\) is an \(S(\cct, \cck, \ccv)\) with \(\cck = \cct + 1\), then
\[
 \Freq~{\cD}(\ccn)[\alpha] = \Freq~{\cD}(\alpha - \ccn)[\alpha]
\]
for \(\ccn \in \NN\). In particular,
\[
 \Freq{\cD}(\ccn)[\alpha] = \Freq{\cD}(2 {\ccv \choose \cck} - \alpha - \ccn)[\alpha].
\]
 
\end{proposition}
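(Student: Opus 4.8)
The plan is to prove the displayed symmetry by producing an explicit involution on the set \(\Orbit{\cD}[\alpha]\) that interchanges the two fibres being counted, and then to deduce the ``in particular'' statement formally from the complementarity \(\size{\bPosition{\cD'}} + \size{\bPosition~{\cD'}} = {\ccv \choose \cck}\). The involution I would use is the order-reversing relabelling \(\rho \colon \cci \mapsto \ccv - 1 - \cci\) of \([\ccv]\), sending \(\cD' \mapsto \cD'^\rho\); since \(\rho \in \Sym([\ccv])\), we have \(\cD'^\rho \in \Orbit{\cD}\), and \(\rho^2\) is the identity.

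The first step, and the one I expect to carry the real content, is to express \(\cca_\cci\) through a point-by-point count that uses \(\cck = \cct + 1\) essentially. Fix \(\cD' \in \Orbit{\cD}\) and a non-block \(\ccP \in {[\ccv] \choose \cck} \setminus \Block[\cD']\). For each \(\ccp \in \ccP\) the \(\cct\)-subset \(\ccP \setminus \set{\ccp}\) lies in a unique block, which has the form \(\ccB_\ccp = (\ccP \setminus \set{\ccp}) \cup \set{\ccq_\ccp}\); here \(\ccq_\ccp \neq \ccp\) because \(\ccP\) is not a block, so \(\ccB_\ccp = \ccP^{(\ccp\ \ccq_\ccp)}\) is a genuine neighbour of \(\ccP\). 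Moreover \(\ccB_\ccp \neq \ccB_{\ccp'}\) for \(\ccp \neq \ccp'\), since \(\ccB_\ccp = \ccB_{\ccp'}\) would force \(\ccB_\ccp \supseteq \ccP\) and hence \(\ccB_\ccp = \ccP\). Thus \(\ccp \mapsto \ccB_\ccp\) is a bijection from \(\ccP\) onto the blocks meeting \(\ccP\) in \(\cct\) points, and \(\ccB_\ccp\) is an out-block of \(\ccP\) exactly when \(\ccq_\ccp < \ccp\) and an in-block (a block in \(\InNeighbors{\ccP}\)) exactly when \(\ccq_\ccp > \ccp\). Consequently the number of out-blocks of \(\ccP\) and the number of its in-blocks sum to \(\cck\). (This is precisely where \(\cck = \cct + 1\) is needed: deleting one point of \(\ccP\) yields a \(\cct\)-set, producing exactly one neighbouring block per point; for \(\cck > \cct + 1\) the bijection between points and block-neighbours breaks.)

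Next I would track the effect of \(\rho\). Because \(\rho\) reverses the order of \([\ccv]\), the non-blocks of \(\cD'^\rho\) are exactly the sets \(\ccP^\rho\) for \(\ccP\) a non-block of \(\cD'\); the unique block of \(\cD'^\rho\) on \(\ccP^\rho \setminus \set{\ccp^\rho}\) is \(\ccB_\ccp^\rho\), whose replacement point is \(\ccq_\ccp^\rho\); and \(\ccq_\ccp^\rho < \ccp^\rho\) if and only if \(\ccq_\ccp > \ccp\). Hence \(\rho\) interchanges the out-blocks and in-blocks of each non-block, so \(\cca_\cci(\cD'^\rho) = \cca_{\cck - \cci}(\cD')\) for every \(\cci\); in particular \(\cca_0(\cD'^\rho) = \cca_\cck(\cD')\) and \(\cca_\cck(\cD'^\rho) = \cca_0(\cD')\). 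Therefore \(\cca_0 + \cca_\cck\) is preserved, so \(\cD' \mapsto \cD'^\rho\) is an involution of \(\Orbit{\cD}[\alpha]\) onto itself, and on \(\Orbit{\cD}[\alpha]\) it carries \(\set{\cD' : \cca_0(\cD') = \ccn}\) onto \(\set{\cD' : \cca_0(\cD') = \alpha - \ccn}\), using \(\cca_\cck(\cD') = \alpha - \cca_0(\cD')\). Recalling \(\Freq~{\cD}(\ccn)[\alpha] = \size{\set{\cD' \in \Orbit{\cD}[\alpha] : \cca_0(\cD') = \ccn}}\), this bijection gives \(\Freq~{\cD}(\ccn)[\alpha] = \Freq~{\cD}(\alpha - \ccn)[\alpha]\).

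Finally the ``in particular'' statement is pure bookkeeping. From \(\size{\bPosition{\cD'}} = {\ccv \choose \cck} - \cca_0(\cD')\) one gets \(\Freq{\cD}(\ccn)[\alpha] = \Freq~{\cD}\bigl({\ccv \choose \cck} - \ccn\bigr)[\alpha]\); applying the symmetry just proved sends the argument to \(\alpha - {\ccv \choose \cck} + \ccn\), and converting back to \(\Freq\) via the same complementarity turns the reflection centre \(\ccn \mapsto \alpha - \ccn\) into \(\ccn \mapsto 2{\ccv \choose \cck} - \alpha - \ccn\), which is the claimed identity.
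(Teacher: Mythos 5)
Your proof is correct and follows essentially the same route as the paper: the same order-reversing permutation \(\cci \mapsto \ccv - 1 - \cci\), and the same key identity \(\bigsize{\OutNeighbors{\ccP} \cap \Block} + \bigsize{\InNeighbors{\ccP} \cap \Block} = \cck\) for non-blocks, derived from the uniqueness of the block on each \(\cct\)-subset \(\ccP \setminus \set{\ccp}\) (which is exactly where \(\cck = \cct + 1\) enters, as you note). The only differences are cosmetic: you spell out the injectivity of \(\ccp \mapsto \ccB_\ccp\) and the complementarity bookkeeping for the ``in particular'' clause, which the paper leaves implicit, while the paper instead reduces first to the representative \(\alpha = \cca_0(\cD) + \cca_{\cck}(\cD)\) --- a step your orbit-wise involution renders unnecessary.
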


\begin{proof}
 \comment{Proof.}
\label{sec:org4046dfa}
\mylink{p14-1}
It suffices to show the assertion for \(\alpha = \cca_0(\cD) + \cca_{\cck}(\cD)\),
since \(\Freq~{\cD}(\ccn)[\alpha] = \Freq~{\cD'}(\ccn)[\alpha]\) for \(\cD' \in \Orbit{\cD}\).
Let \(\Block = \Block[\cD]\) and \(\ccP \in {[\ccv] \choose \cck} \setminus \Block\).
We first show that
\begin{equation}
\label{equ:prop:symmetric-components-sum}
 \bigsize{\OutNeighbors{\ccP} \cap \Block} + \bigsize{\InNeighbors{\ccP} \cap \Block} = \cck.
\end{equation}
Let \(\ccp \in \ccP\).
Since \(\cck = \cct + 1\), there is a unique \(\ccp' \in [\ccv]\) such that
\(\ccP^{(\ccp\ \ccp')} \in \Block\). 
Note that \(\size{\OutNeighbors{\ccP} \cap \Block}\) equals the number of \(\ccp \in \ccP\) with \(\ccp' < \ccp\).
Similarly,  \(\size{\InNeighbors{\ccP} \cap \Block}\) equals the number of \(\ccp \in \ccP\) with \(\ccp' > \ccp\).
Hence \eqref{equ:prop:symmetric-components-sum} holds.

Let \(\cpi\) be the permutation \textbf{\(\cci \mapsto \ccv - 1 - \cci\) of \([\ccv]\)}.
We next show that \(\cD^\cpi \in \Orbit{\cD}[\alpha]\).
Since \(\ccp < \ccq \iff \ccp^\cpi > \ccq^\cpi\) \textbf{for \(\ccp, \ccq \in [\ccv]\)}, it follows that  \(\bigsize{\InNeighbors{\ccP} \cap \Block} = \bigsize{\OutNeighbors{\ccP^\cpi} \cap \Block^\cpi}\).
By \eqref{equ:prop:symmetric-components-sum},
\[
 \bigsize{\OutNeighbors{\ccP} \cap \Block} = i \iff \bigsize{\OutNeighbors{\ccP^\cpi} \cap \Block^\cpi} = \cck - \cci.
\]
This implies that \(\cca_i(\cD) = \cca_{\cck - i}(\cD^\cpi)\), and hence that
\(\cca_0(\cD^\cpi) + \cca_{\cck}(\cD^\cpi) = \cca_\cck(\cD) + \cca_0(\cD) = \alpha\).
Thus \(\cD^\cpi \in \Orbit{\cD}[\alpha]\).
We also see that
\[
 \cca_0(\cD) = \ccn \iff \cca_0(\cD^\cpi) = \alpha - \ccn.
\]
Therefore \(\Freq~{\cD}(\ccn)[\alpha] = \Freq~{\cD}(\alpha - \ccn)[\alpha]\).
\end{proof}

 \begin{example}
 \comment{Exm.}
\label{sec:orge09cc14}
\label{org8b68bdd}
The symmetric components \textbf{of the game distribution} of an \(S(2, 3, 9)\) are as shown in Table \ref{tab:orgfb51808}.
\vspace{-0.5em}
{\tabcolsep = 1mm
\begin{table}[H]
\caption{\label{tab:orgfb51808}The symmetric components of \textbf{an} \(S(2, 3, 9)\).}
\centering
\begin{tabular}{c|ccccccccccccc|c}
\(\ccn\) & 68 & 69 & 70 & 71 & 72 & 73 & 74 & 75 & 76 & 77 & 78 & 79 & 80 & Total\\
\hline
\rule[0mm]{0mm}{3.6mm} \(\Freq{\cD}(\ccn)[24]\) & 1 & 1 & 1 & 1 & 1 & 1 & 1 & 1 & 1 &  &  &  &  & 9\\
\(\Freq{\cD}(\ccn)[22]\) &  & 5 & 6 & 6 & 7 & 6 & 7 & 6 & 6 & 5 &  &  &  & 54\\
\(\Freq{\cD}(\ccn)[20]\) &  &  & 9 & 14 & 24 & 23 & 22 & 23 & 24 & 14 & 9 &  &  & 162\\
\(\Freq{\cD}(\ccn)[18]\) &  &  &  & 15 & 28 & 40 & 49 & 54 & 49 & 40 & 28 & 15 &  & 318\\
\(\Freq{\cD}(\ccn)[16]\) &  &  &  &  & 17 & 24 & 37 & 45 & 51 & 45 & 37 & 24 & 17 & 297\\
\hline
\(\Freq{\cD}(\ccn)\) & 1 & 6 & 16 & 36 & 77 & 94 & 116 & 129 & 131 & 104 & 74 & 39 & 17 & 840\\
\end{tabular}
\end{table}
}\normalsize
 
\end{example}

 \begin{example}
 \comment{Exm.}
\label{sec:org5c4af94}
\label{org0e7433a}
\mylink{p14-15}
As shown in Table \ref{tab:orgb398118}, \textbf{the game distribution of} an \(S(2, 3, 7)\) has a unique symmetric component. 
This property characterizes projective Steiner triple systems as we will see in Theorem \ref{org1705e9e}.

\vspace{-0.5em}
{\tabcolsep = 1mm
\begin{table}[H]
\caption{\label{tab:orgb398118}The symmetric component of \textbf{an} \(S(2, 3, 7)\).}
\centering
\begin{tabular}{c|cccccccc|c}
\(n\) & 28 & 29 & 30 & 31 & 32 & 33 & 34 & 35 & Total\\
\hline
\rule[0mm]{0mm}{3.6mm} \(\Freq{\cD}(\ccn)[7]\) & 1 & 3 & 5 & 6 & 6 & 5 & 3 & 1 & 30\\
\hline
\(\Freq{\cD}(\ccn)\) & 1 & 3 & 5 & 6 & 6 & 5 & 3 & 1 & 30\\
\end{tabular}
\end{table}
}\normalsize
 
\end{example}

 \begin{example}
 \comment{Exm.}
\label{sec:org49b7a92}
\label{org6f257a1}
\mylink{p14-17}
Let \(\cD\) be an \(S(5, 6, 12)\). 
Then
\begin{equation}
\label{equ:exm-5-6-12-comp}
 \cca_i(\cD) = \cca_{6 - i}(\cD),
\end{equation}
so the symmetric components of \textbf{\(\Freq{\cD}\)} are as shown in Table \ref{tab:org29d0a6e}.
We can prove \eqref{equ:exm-5-6-12-comp} as follows.
If \(\ccB \in \Block[\cD]\), then \(\overline{\ccB} \in \Block[\cD]\), where \(\overline{\ccB} = [12] \setminus \ccB\).
This implies that if \(\ccP \in {[12] \choose 6} \setminus \Block[\cD]\) and \(\ccP^{(\ccp\ \ccq)} \in \Block[\cD]\),
then \(\overline{\ccP} \in {[12] \choose 6} \setminus \Block[\cD]\) and \(\overline{\ccP}^{(\ccp\ \ccq)} = \overline{\ccP^{(\ccp\ \ccq)}} \in \Block[\cD]\).
For example, if \(\cD = \shD\) and \(\ccP = \set{1, 2, 3, 4, 5, 11}\), then \(\ccP^{(5\ 0)} = \set{1, 2, 3, 4, 0, 11} \in \Block[\cD]\) and \(\overline{\ccP}^{(5\ 0)} = \set{0, 6, 7, 8, 9, 10}^{(5\ 0)} = \set{5, 6, 7, 8, 9, 10} \in \Block[\cD]\).
Note that if \(\ccP^{(\ccp\ \ccq)} \in \OutNeighbors{\ccP}\), then \(\overline{\ccP}^{(\ccp\ \ccq)} \in \InNeighbors{\overline{\ccP}}\).
It follows that if \(\size{\OutNeighbors{\ccP} \cap \Block[\cD]} = i\), then \(\size{\OutNeighbors{\overline{\ccP}} \cap \Block[\cD]} = 6 - i\).
Hence \(\cca_i(\cD) = \cca_{6 - i}(\cD)\), so its symmetric components are as shown in Table \ref{tab:org29d0a6e}.
The symmetric components of \textbf{the game distribution of an} \(S(4, 5, 11)\) are also shown in Table \ref{tab:org1ec7151}.
\vspace{-0.5em}
{\tabcolsep = 1mm
\begin{table}[H]
\caption{\label{tab:org29d0a6e}The symmetric components of \textbf{an} \(\ccS(5, 6, 12)\).}
\centering
\begin{tabular}{c|cccccccccccc|c}
\(\ccn\) & 905 & 906 & 907 & 908 & 909 & 910 & 911 & 912 & 913 & 914 & 915 & 916 & Total\\
\hline
\rule[0mm]{0mm}{3.6mm} \(\Freq{\cD}(\ccn)[38]\) & 1 &  &  &  &  &  &  &  &  &  &  &  & 1\\
\(\Freq{\cD}(\ccn)[36]\) &  & 10 &  &  &  &  &  &  &  &  &  &  & 10\\
\(\vdots\) &  &  &  &  &  & \(\dotsm\) & \(\dotsm\) &  &  &  &  &  & \(\vdots\)\\
\(\Freq{\cD}(\ccn)[18]\) &  &  &  &  &  &  &  &  &  &  & 115 &  & 115\\
\(\Freq{\cD}(\ccn)[16]\) &  &  &  &  &  &  &  &  &  &  &  & 1 & 1\\
\hline
\(\Freq{\cD}(\ccn)\) & 1 & 10 & 42 & 150 & 351 & 650 & 1012 & 1237 & 939 & 532 & 115 & 1 & 5040\\
\end{tabular}
\end{table}
}
{\tabcolsep = 1mm
\begin{table}[H]
\caption{\label{tab:org1ec7151}The symmetric components of \textbf{an} \(S(4, 5, 11)\).}
\centering
\begin{tabular}{c|cccccccccccc|c}
\(\ccn\) & 443 & 444 & 445 & 446 & 447 & 448 & 449 & 450 & 451 & 452 & 453 & 454 & Total\\
\hline
\rule[0mm]{0mm}{3.6mm} \(\Freq{\cD}(\ccn)[29]\) & 1 & 1 & 3 &  &  &  &  & 3 & 1 & 1 &  &  & 10\\
\(\Freq{\cD}(\ccn)[28]\) &  & 7 & 7 & 12 & 8 & 8 & 8 & 12 & 7 & 7 &  &  & 76\\
\(\Freq{\cD}(\ccn)[27]\) &  & 2 & 25 & 62 & 62 & 77 & 77 & 62 & 62 & 25 & 2 &  & 456\\
\(\Freq{\cD}(\ccn)[26]\) &  &  & 7 & 44 & 106 & 155 & 212 & 155 & 106 & 44 & 7 &  & 836\\
\(\Freq{\cD}(\ccn)[25]\) &  &  &  & 31 & 167 & 220 & 345 & 345 & 220 & 167 & 31 &  & 1526\\
\(\Freq{\cD}(\ccn)[24]\) &  &  &  & 1 & 8 & 140 & 259 & 402 & 259 & 140 & 8 & 1 & 1218\\
\(\Freq{\cD}(\ccn)[23]\) &  &  &  &  &  & 50 & 101 & 216 & 216 & 101 & 50 &  & 734\\
\(\Freq{\cD}(\ccn)[22]\) &  &  &  &  &  &  & 10 & 36 & 58 & 36 & 10 &  & 150\\
\(\Freq{\cD}(\ccn)[21]\) &  &  &  &  &  &  &  & 6 & 9 & 9 & 6 &  & 30\\
\(\Freq{\cD}(\ccn)[20]\) &  &  &  &  &  &  &  &  & 1 & 2 & 1 &  & 4\\
\hline
\(\Freq{\cD}(\ccn)\) & 1 & 10 & 42 & 150 & 351 & 650 & 1012 & 1237 & 939 & 532 & 115 & 1 & 5040\\
\end{tabular}
\end{table}
}
 
\end{example}

\section{A Characterization of Projective Steiner Triple Systems}
\label{sec:org261f1be}
\label{orge91698d}

 \begin{theorem}
 \comment{Thm.}
\label{sec:org1bec4ad}
\label{org1705e9e}
\mylink{p14-thm}If \(\cD\) is an \(S(2, 3, \ccv)\),
then \(\cD\) is projective if and only if
\textbf{its game distribution} has a unique symmetric component, that is, \(\size{\ccs(\cD)} = 1\).
 
\end{theorem}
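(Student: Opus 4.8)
The plan is to first translate \(\size{\ccs(\cD)}=1\) into a statement about orderings, and then reduce it to invariance under adjacent transpositions. Since \(\cck=\cct+1=3\), for every non-block \(\ccP\) and every \(\ccp\in\ccP\) there is a unique point \(\ccp'\) with \(\ccP^{(\ccp\ \ccp')}\in\Block\), namely the third point of the unique block through \(\ccP\setminus\set{\ccp}\); moreover \(\ccp'\notin\ccP\). As in the proof of Proposition~\ref{org3399301}, \(\size{\OutNeighbors{\ccP}\cap\Block}\) equals the number of \(\ccp\in\ccP\) with \(\ccp'<\ccp\), so a non-block contributes to \(\cca_0(\cD')+\cca_3(\cD')\) precisely when the three comparisons \(\ccp\) versus \(\ccp'\) all point the same way. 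Thus \(\cca_0(\cD')+\cca_3(\cD')\) counts the non-blocks of \(\cD'\) whose three point–partner pairs are uniformly oriented, where the pairing \(\ccp\mapsto\ccp'\) is intrinsic to the unlabeled design and only the comparisons depend on the order. Since \(\size{\ccs(\cD)}=1\) is equivalent to the constancy of \(\pi\mapsto\cca_0(\cD^\pi)+\cca_3(\cD^\pi)\) on \(\Sym([\ccv])\), and adjacent transpositions generate, it suffices to show, for every \(\cD'\in\Orbit{\cD}\) and every adjacent transposition \((j\ j+1)\), that this count is unchanged.

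Next I would isolate the non-blocks affected by one swap. Fix \(\cD'\), write \(\Block=\Block[\cD']\), and let \(u,w\) be the points carrying the labels \(j,j+1\). The swap changes the comparison of a point–partner pair \(\set{\ccp,\ccp'}\) only if \(\set{\ccp,\ccp'}=\set{u,w}\), so the only non-blocks that can gain or lose uniform orientation are those in which \(u,w\) form such a pair. These split into \(A=\bigset{\set{u,x,y}:\set{x,y,w}\in\Block}\), in which \(w\) is the partner of \(u\), and the symmetric family \(B=\bigset{\set{w,x,y}:\set{x,y,u}\in\Block}\). A direct check using uniqueness of blocks shows that for \(P\in A\) the partners \(x',y'\) avoid \(\set{u,w}\), so the swap flips only the \(u\)–\(w\) comparison and leaves the \(x\)– and \(y\)–comparisons intact; the same holds for \(B\). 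Writing \(\size{A_{<<}}\) and \(\size{A_{>>}}\) for the numbers of members of \(A\) whose two remaining comparisons are both of type \(\ccp<\ccp'\) and both of type \(\ccp>\ccp'\) respectively, and similarly for \(B\), the net change of the count under the swap is \((\size{A_{>>}}-\size{A_{<<}})-(\size{B_{>>}}-\size{B_{<<}})\).

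For the forward direction I would then invoke the linear model of Example~\ref{org0d144dd}. If \(\cD\) is projective, then \(\ccp'=\ccp+\ccs_\ccP\) where \(\ccs_\ccP=\sum_{\ccp\in\ccP}\ccp\neq 0\); in particular every \(P\in A\cup B\) has \(\ccs_\ccP=u+w=:\ccs_0\neq 0\). The map \(\set{u,x,y}\mapsto\set{u,\,x+\ccs_0,\,y+\ccs_0}\) is then a fixed-point-free involution of \(A\): it preserves the block-completion \(w\), and the partner of \(x+\ccs_0\) in the image is again \(x\), so it reverses both remaining comparisons and therefore interchanges \(A_{<<}\) with \(A_{>>}\). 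Hence \(\size{A_{<<}}=\size{A_{>>}}\), and likewise \(\size{B_{<<}}=\size{B_{>>}}\), so the net change vanishes. Since every member of \(\Orbit{\cD}\) is again projective, the count is constant on the orbit and \(\size{\ccs(\cD)}=1\).

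For the converse I would argue by contraposition through the Veblen–Young condition of Remark~\ref{orgde64e0f}. Applying that condition to the three blocks \(\set{b,c,a'},\set{a,c,b'},\set{a,b,c'}\) determined by a non-block \(\set{a,b,c}\) shows that \(\cD\) is projective if and only if the partner triple \(\set{a',b',c'}\) of every non-block is itself a block. Assuming \(\cD\) is not projective, I would fix a non-block whose partner triple is a non-block; there the algebraic involution of the previous paragraph has no combinatorial counterpart, and the plan is to use this failure to construct a labeling and an adjacent transposition for which \(\size{A_{>>}}-\size{A_{<<}}\neq\size{B_{>>}}-\size{B_{<<}}\), producing two orbit members with different values of \(\cca_0+\cca_3\) and hence \(\size{\ccs(\cD)}\ge 2\). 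I expect this to be the main obstacle: in the projective case a single global involution enforces local balance for all swaps at once, whereas here one must order the remaining points so that the non-closing triangle creates an imbalance in the affected families and then rule out any accidental cancellation by the other affected non-blocks.
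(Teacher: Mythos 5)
Your forward implication is sound and takes a genuinely different route from the paper's. The paper computes \(\cca_0(\cD') + \cca_3(\cD')\) through Lemma \ref{org0f0d991}(2) as \(\sum \ccI(\ccB, \ccC) - \ccv(\ccv - 1)(\ccv - 3)/12\) and, using the Veblen--Young closure, groups the intersecting pairs of blocks into triples each contributing exactly \(3\) to the sum (equation \eqref{equ:six-blocks-sum}), which pins the value down to the explicit constant \(\ccv(\ccv - 1)(\ccv - 3)/24\) for every labelling at once. Your translation involution \(\set{\ccu, \ccx, \ccy} \mapsto \set{\ccu, \ccx + \ccs_0, \ccy + \ccs_0}\) with \(\ccs_0 = \ccu + \ccw\) instead proves invariance under each adjacent transposition directly; the checks you leave implicit (that the image lies in \(A\) because its element sum is again \(\ccs_0\), that the partner of \(\ccx + \ccs_0\) in the image is \(\ccx\) so both remaining comparisons flip, and that a fixed point would force \(\ccu = 0\)) all go through in \(\FF_2^{\ccd + 1}\). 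This is an attractive alternative, though it yields only constancy, not the explicit common value.

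The converse, however, is a genuine gap, and you have located it yourself: you never get past the plan of ordering the points so that the non-closing quadrilateral ``creates an imbalance'' while ruling out ``accidental cancellation by the other affected non-blocks.'' That cancellation problem is real. In the paper's coordinates, the affected pairs for the swap \((0\ 1)\) are \(\set{\ccB_\ccx, \ccC_\ccx} = \bigfamily{\set{0, \ccx, \ccy_\ccx}, \set{1, \ccx, \ccz_\ccx}}\), one for each \(\ccx \in \set{2, \dotsc, \ccv - 2}\), and a single non-Pasch configuration at one value of \(\ccx\) says nothing about the other \(\ccv - 4\) terms, so no direct estimate shows the net change is nonzero. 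The paper circumvents this not by killing the extra terms but by differencing them away: after normalizing labels via Remark \ref{orgde64e0f} so that \(\set{0, \ccv - 2, 2}, \set{1, \ccv - 2, 3}, \set{0, \ccv - 3, 3}, \set{1, \ccv - 3, 4}, \set{0, 1, \ccv - 1} \in \Block\), it evaluates
\[
 \sum_{\set{\ccB, \ccC} \in {\Block \choose 2}} \Bigl(\ccI(\ccB, \ccC) - \ccI(\ccB, \ccC)^{(0\ 1)}\Bigr) - \sum_{\set{\ccB, \ccC} \in {\Block \choose 2}} \Bigl(\ccI(\ccB, \ccC)^{(2\ 3)} - \ccI(\ccB, \ccC)^{(2\ 3)(0\ 1)}\Bigr),
\]
where Lemma \ref{orgcbfd905}(3) shows only the pairs \(\set{\ccB_\ccx, \ccC_\ccx}\) contribute and the chosen blocks force every term with \(\ccx \neq \ccv - 2\) to cancel between the two sums; the surviving term at \(\ccx = \ccv - 2\) gives \(1 - 2 - (2 - 1) = -2 \neq 0\), so at least one of the two single-transposition differences is nonzero, two orbit members have different \(\cca_0 + \cca_3\), and \(\size{\ccs(\cD)} > 1\). (By Lemma \ref{org0f0d991}(2), invariance of \(\cca_0 + \cca_3\) is equivalent to invariance of \(\sum \ccI(\ccB, \ccC)\), so this difference-of-differences is exactly your missing step, and your families \(A\), \(B\) are the pairs \(\set{\ccB_\ccx, \ccC_\ccx}\) in disguise.) Your reformulation of projectivity via partner triples of non-blocks is fine --- it restates the Pasch closure of Remark \ref{orgde64e0f} --- but without an argument of this kind, or some other device controlling all affected pairs simultaneously, the backward implication remains unproved.
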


\comment{comment}
\label{sec:org3de278a}
To prove Theorem \ref{org1705e9e}, we show two lemmas.

Let \(\cD\) be an \(S(2, 3, \ccv)\).
The following lemma enables us to calculate \(\cca_0(\cD) + \cca_3(\cD)\).
Recall that \(\lambda_i\), defined in \eqref{equ:lambdai}, is the number of blocks of \(\cD\) containing \textbf{\(i\) given points}.

 \begin{lemma}
 \comment{Lem.}
\label{sec:org4044dc6}
\label{org0f0d991}
Let \(\cD\) be a \(\design{\cct}{\ccv, \cck, \lambda}\) with block set \(\Block\).
\begin{enumerate}
\item If \(\cct \ge 1\) and \(\Block\) is \textbf{an independent set} in \(\Wel{\ccv}{\cck}\), then
\[
 \displaystyle \sum_{i = 0}^k i \cca_i(\cD) = \frac{\lambda_0 \cck(\ccv - \cck)}{2}.
 \]
\item If \(\cD\) is an \(S(2, 3, \ccv)\), then
\[
  \cca_0(\cD) + \cca_3(\cD) = \sum_{\set{B, C} \in {\Block \choose 2}} \ccI(\ccB, \ccC) - \frac{\ccv(\ccv - 1)(\ccv - 3)}{12},
 \]
where \(\ccI(\ccB, \ccC) = \size{\InNeighbors{\ccB} \cap \InNeighbors{\ccC}}\).
\end{enumerate}
 
\end{lemma}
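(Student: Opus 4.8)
The plan is to prove both parts by double counting the directed edges of Welter's game $\Wel{\ccv}{\cck}$ that are incident to blocks. For part~(1), the starting observation is that since $\Block$ is an independent set, no edge joins two blocks, so every directed edge terminating at a block originates at a non-block. Consequently
\[
 \sum_{i = 0}^{\cck} i\, \cca_i(\cD) = \sum_{\ccB \in \Block} \size{\InNeighbors{\ccB}},
\]
because the left-hand side counts pairs $(\ccP, \ccB)$ with $\ccP$ a non-block and $\ccB$ an out-block of $\ccP$ (edges from non-blocks to blocks), while the right-hand side counts all edges terminating at a block.

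Next I would evaluate $\size{\InNeighbors{\ccB}}$ directly. An in-neighbor of $\ccB$ arises by replacing some $\ccq \in \ccB$ with a larger $\ccp \notin \ccB$, so
\[
 \size{\InNeighbors{\ccB}} = \size{\bigset{(\ccq, \ccp) : \ccq \in \ccB,\ \ccp \notin \ccB,\ \ccq < \ccp}} = \cck(\ccv - 1) - \sum_{\ccq \in \ccB} \ccq - \binom{\cck}{2},
\]
the last equality following from $\size{\bigset{\ccp \notin \ccB : \ccp > \ccq}} = (\ccv - 1 - \ccq) - \size{\bigset{\ccb \in \ccB : \ccb > \ccq}}$, whose correction terms sum over $\ccq \in \ccB$ to $\binom{\cck}{2}$. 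Summing over all blocks and using that $\cD$, being a $\cct$-design with $\cct \ge 1$, is a $1$-design in which every point lies in exactly $\lambda_1 = \lambda_0 \cck / \ccv$ blocks gives $\sum_{\ccB \in \Block} \sum_{\ccq \in \ccB} \ccq = \lambda_1 \cdot \tfrac{\ccv(\ccv - 1)}{2} = \tfrac{\lambda_0 \cck (\ccv - 1)}{2}$; substituting and simplifying collapses the total to $\tfrac{\lambda_0 \cck(\ccv - \cck)}{2}$, which is part~(1).

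For part~(2) the key identity is that the sum over unordered block-pairs secretly counts pairs of out-blocks of a single non-block. Since $\ccP \in \InNeighbors{\ccB} \cap \InNeighbors{\ccC}$ holds precisely when both $\ccB$ and $\ccC$ are out-blocks of $\ccP$, and a block has no block out-neighbor by independence, I get
\[
 \sum_{\set{\ccB, \ccC} \in {\Block \choose 2}} \ccI(\ccB, \ccC) = \sum_{\ccP \in {[\ccv] \choose 3} \setminus \Block} \binom{\size{\OutNeighbors{\ccP} \cap \Block}}{2} = \sum_{i = 0}^{3} \binom{i}{2} \cca_i(\cD) = \cca_2(\cD) + 3\,\cca_3(\cD),
\]
where $\cca_i(\cD) = 0$ for $i \ge 4$ as already noted. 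I then assemble the three moment relations in the $\cca_i(\cD)$: this second-moment count; the zeroth-moment count of non-blocks $\sum_i \cca_i(\cD) = \binom{\ccv}{3} - \lambda_0 = \tfrac{\ccv(\ccv - 1)(\ccv - 3)}{6}$; and the first-moment count $\sum_i i\, \cca_i(\cD) = \tfrac{3(\ccv - 3)}{2}\lambda_0 = \tfrac{\ccv(\ccv - 1)(\ccv - 3)}{4}$ supplied by part~(1).

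A short elimination then expresses the desired combination as $\cca_0(\cD) + \cca_3(\cD) = \bigl(\cca_2(\cD) + 3\cca_3(\cD)\bigr) + \bigl(\sum_i \cca_i(\cD) - \sum_i i\,\cca_i(\cD)\bigr)$, and the numerical difference $\tfrac{\ccv(\ccv-1)(\ccv-3)}{6} - \tfrac{\ccv(\ccv-1)(\ccv-3)}{4} = -\tfrac{\ccv(\ccv-1)(\ccv-3)}{12}$ produces exactly the stated formula. The two explicit cardinality computations and this final linear elimination are routine; the one step requiring the right idea, and the main obstacle, is recognizing the block-pair sum $\sum_{\set{\ccB,\ccC}} \ccI(\ccB,\ccC)$ as the second binomial moment $\sum_i \binom{i}{2} \cca_i(\cD)$. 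Once the zeroth, first, and second moments are in hand (and $\cck = 3$ makes exactly these three both available and sufficient, since $\cca_i(\cD) = 0$ for $i \ge 4$), isolating $\cca_0(\cD) + \cca_3(\cD)$ is forced by linear algebra.
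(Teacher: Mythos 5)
Your proposal is correct and follows essentially the same route as the paper: part (1) by double counting edges from non-blocks into blocks and evaluating \(\size{\InNeighbors{\ccB}} = \cck(\ccv-1) - \sum_{\ccb \in \ccB}\ccb - \binom{\cck}{2}\) via \(\lambda_1\), and part (2) by recognizing \(\sum_{\set{\ccB,\ccC}} \ccI(\ccB,\ccC)\) as the second binomial moment \(\sum_i \binom{i}{2}\cca_i(\cD) = \cca_2(\cD) + 3\cca_3(\cD)\) and eliminating against the zeroth and first moments. The linear combination you use to isolate \(\cca_0(\cD) + \cca_3(\cD)\) is exactly the paper's, so there is nothing to add.
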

\begin{proof}
 \comment{Proof.}
\label{sec:orgf0b3dc0}
\noindent
(1) Let
\[
 \IntersectPositions{\cD}{\cci} = \biggfamily{\ccP \in {[\ccv] \choose \cck} \setminus \Block : \bigsize{\OutNeighbors{\ccP} \cap \Block} = i}. \mylink{p16-6}
\]
Then \(\cca_i(\cD)= \size{\IntersectPositions{\cD}{\cci}}\).
Since \(\Block\) is \textbf{an independent set}, 
we see that
\(\bigcup_{i = 0}^{\cck} \IntersectPositions{\cD}{\cci} = {[\ccv] \choose \cck} \setminus \Block\)
and
\begin{align*}
 \sum_{i = 0}^{\cck} i \cca_i(\cD) &= \sum_{i = 0}^{\cck} \Bigsize{\Bigfamily{(\ccP, \ccB) : \ccP \in \IntersectPositions{\cD}{\cci},\ \ccB \in \OutNeighbors{\ccP} \cap \Block}} \\
 &= \biggsize{\biggfamily{(\ccP, \ccB) : \ccP \in {[\ccv] \choose \cck} \setminus \Block,\ \ccB \in \OutNeighbors{\ccP} \cap \Block} } \\
 &= \Bigsize{\Bigfamily{\rule[0mm]{0mm}{3.4mm}(\ccP, \ccB) : \ccB \in \Block,\ \ccP \in \InNeighbors{\ccB}}} \\
 &= \sum_{\ccB \in \Block} \bigsize{\InNeighbors{\ccB}}.\mylink{p16-12}
\end{align*} 
Let \(\ccB \in \Block\) and write \(\ccB = \set{\ccb_1, \dotsc, \ccb_k}\) with \(\ccb_1 < \dotsb < \ccb_k\).
Then \(\ccP \in \InNeighbors{\ccB}\) if and only if \(\ccP = \ccB^{(\ccb_i\ \ccp)}\) for some \(\ccb_i\) and \(\ccp \not \in \ccB\) with \(\ccp > \ccb_i\).
The number of such \(\ccp\) equals \(\ccv - 1 - \ccb_i - (\cck - i)\). Hence
\begin{align*}
 \size{\InNeighbors{\ccB}} &= \sum_{i = 1}^\cck \bigl(\ccv - 1 - \ccb_i - (\cck - i)\bigr)  
 = \cck(\ccv - 1) - \sum_{\ccb \in \ccB} \ccb - \frac{\cck(\cck - 1)}{2}.
\end{align*}
Since
\begin{align*}
 \sum_{\ccB \in \Block} \sum_{\ccb \in \ccB} \ccb &= (0 + 1 + \dotsb + \ccv - 1) \lambda_1 \\
 &= \frac{v(v - 1)}{2} \frac{{v - 1 \choose t - 1}}{{k - 1 \choose t - 1}} \lambda \\
 &= \frac{k(v - 1)}{2} \frac{{v \choose t}}{{k \choose t}} \lambda \\
 &= \frac{\cck(\ccv - 1) \lambda_0}{2},
\end{align*}
it follows that
\begin{align*}
 \sum_{\ccB \in \Block} \bigsize{\InNeighbors{\ccB}} &= \lambda_0 \left(\cck(\ccv - 1) - \frac{\cck(\ccv - 1)}{2} - \frac{\cck(\cck - 1)}{2} \right) 
 = \frac{\lambda_0 \cck(\ccv - \cck)}{2}.
\end{align*}

\vspace{0.5em}
\noindent
(2) Let \(\cca_i = \cca_i(\cD)\).
We first show that \mylink{p17-3}
\begin{equation}
\label{equ:lem:a0a3}
 \sum_{\cci = 0}^3 {i \choose 2} \cca_i = \cca_2 + 3 \cca_3 = \sum_{\set{\ccB, \ccC} \in {\Block \choose 2}} \ccI(\ccB, \ccC). 
\end{equation}
By the definition of \(\cca_i\),
\begin{align*}
\sum_{\cci = 0}^3 {i \choose 2} \cca_i &= 
 \sum_{\cci = 0}^3 \size{\Set{\bigl(\ccP, \set{\ccB, \ccC}\bigr) : \ccP \in \IntersectPositions{\cD}{\cci},\ \set{\ccB, \ccC} \in {\OutNeighbors{\ccP} \cap \Block \choose 2}}} \\
 &= \size{\Set{\bigl(\ccP, \set{\ccB, \ccC}\bigr) : \ccP \in {[\ccv] \choose 3} \setminus \Block,\ \set{\ccB, \ccC} \in {\OutNeighbors{\ccP} \cap \Block \choose 2}}} \\
 &= \size{\Set{\bigl(\ccP, \set{\ccB, \ccC}\bigr) : \set{\ccB, \ccC} \in {\Block \choose 2},\ \ccP \in \InNeighbors{\ccB} \cap \InNeighbors{\ccC}}} \\
 &= \sum_{\set{B, C} \in {\Block \choose 2}} \ccI(\ccB, \ccC).
\end{align*}

We now calculate \(\cca_0 + \cca_3\). 
Note that \(\lambda_0 = \ccv(\ccv - 1)/6\).
By (1),
\[
 \sum_{\cci = 0}^3 \cci \cca_i = \cca_1 + 2 \cca_2 + 3 \cca_3 = \frac{\lambda_0 3(\ccv - 3)}{2} = \frac{\ccv (\ccv - 1)(\ccv - 3)}{4}.
\]
Since
\[
 \sum_{\cci = 0}^3  \cca_i = \cca_0 + \cca_1 + \cca_2 + \cca_3 = {\ccv \choose 3} - \lambda_0 = \frac{\ccv (\ccv - 1)(\ccv - 3)}{6},
\]
it follows that
\[
 \sum_{\cci = 0}^3 \cci \cca_i - \sum_{\cci = 0}^3 \cca_i = - \cca_0 + \cca_2 + 2 \cca_3 = \frac{\ccv (\ccv - 1)(\ccv - 3)}{12}. \mylink{p17-14}
\]
Therefore
\[
 \cca_0 + \cca_3 + \frac{\ccv (\ccv - 1)(\ccv - 3)}{12} = \cca_2 + 3 \cca_3 = \sum_{\set{B, C} \in {\Block \choose 2}} \ccI(\ccB, \ccC).
\]
\end{proof}

\comment{connect}
\label{sec:org8f7d48c}
\mylink{p17-16}
From Lemma \ref{org0f0d991}, to calculate \(\cca_0 + \cca_3\), it is enough to compute \(\sum I(B, C)\).
To do this, the following easy lemma will be used.

 \begin{lemma}
 \comment{Lem.}
\label{sec:org4c5f92b}
\label{orgcbfd905}
\mylink{p26-lem}Let \(\ccB\) and \(\ccC\) be two distinct blocks of an \(S(2, 3, \ccv)\).
\begin{enumerate}
\item If \(\ccI(\ccB, \ccC) \neq 0\), then \(\size{\ccB \cap \ccC} = 1\).
\item If \(\ccB = \set{\ccx, \ccb, \ccb'}\), \(\ccC' = \set{\ccx, \ccc, \ccc'}\), \(\ccb < \ccb'\), and \(\ccb < \ccc < \ccc'\), then
\begin{equation}
\label{equ:lem:intersection}
 \ccI(\ccB, \ccC) = [\ccc < \ccb'] + [\ccc' < \ccb'],
\end{equation}
where \([\ ]\) is the Iverson bracket notation, that is, \([\ccS] \Teq 1\) if a statement \(\ccS\) holds, and \([\ccS] = 0\) otherwise.
\item If
\[
  \ccI(\ccB, \ccC) \neq \ccI(\ccB^{(i\ i + 1)}, \ccC^{(i\ i + 1)}),
 \]
then \textbf{\(\set{\ccB, \ccC} = \bigfamily{\set{\cci, \ccx, \ccy}, \set{\cci + 1, \ccx, \ccz}}\) for some \(\ccx, \ccy, \ccz \in [\ccv]\) with \(\ccy \neq \ccz\)}.
\end{enumerate}
 
\end{lemma}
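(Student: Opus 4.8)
The plan is to build all three parts on one combinatorial description of a common in-neighbor. Since blocks have size \(3\), any \(\ccP \in \InNeighbors{\ccB} \cap \InNeighbors{\ccC}\) is a \(3\)-set obtained from \(\ccB\) by replacing one point with a strictly larger point outside \(\ccB\), and simultaneously obtained from \(\ccC\) in the same way; in particular \(\size{\ccP \cap \ccB} = \size{\ccP \cap \ccC} = 2\). For part (1) I would apply inclusion–exclusion inside the \(3\)-set \(\ccP\): \(\size{\ccB \cap \ccC \cap \ccP} \ge 2 + 2 - 3 = 1\), so \(\ccB \cap \ccC \neq \emptyset\). Since two distinct blocks of an \(S(2, 3, \ccv)\) meet in at most one point, this forces \(\size{\ccB \cap \ccC} = 1\).

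For part (2), write \(\ccB \cap \ccC = \set{\ccx}\). The estimate above shows every common in-neighbor contains \(\ccx\); hence such an in-neighbor has the form \(\set{\ccx, \beta, \gamma}\), where \(\beta \in \set{\ccb, \ccb'}\) is a retained \(\ccB\)-leg and \(\gamma \in \set{\ccc, \ccc'}\) a retained \(\ccC\)-leg, with the retained \(\ccC\)-leg \(\gamma\) serving as the point inserted into \(\ccB\) and the retained \(\ccB\)-leg \(\beta\) as the point inserted into \(\ccC\) (this uses \(\ccB \cap \ccC = \set{\ccx}\), so each block's legs lie outside the other). The requirement that an inserted point exceed the point it replaces then becomes \(\gamma > (\text{the discarded } \ccB\text{-leg})\) and \(\beta > (\text{the discarded } \ccC\text{-leg})\). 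I would then evaluate the four choices under the normalizing order \(\ccb < \ccb'\) and \(\ccb < \ccc < \ccc'\): the two choices retaining \(\ccb\) are impossible, since \(\ccb\) cannot exceed a discarded \(\ccC\)-leg; retaining \(\ccb'\) and \(\ccc\) is valid exactly when \(\ccc' < \ccb'\), and retaining \(\ccb'\) and \(\ccc'\) is valid exactly when \(\ccc < \ccb'\). Summing gives \(\ccI(\ccB, \ccC) = [\ccc < \ccb'] + [\ccc' < \ccb']\).

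For part (3), the guiding principle is that \(\ccI(\ccB, \ccC)\) depends only on the relative order of the elements of \(\ccB \cup \ccC\)—indeed, by parts (1)–(2), only on the relative order of the four legs and never on \(\ccx\). An adjacent transposition \((i\ i+1)\) reverses the relative order of exactly one pair of integers, namely \(\set{i, i+1}\), and leaves every other comparison among \(\ccB \cup \ccC\) unchanged, since nothing lies strictly between the consecutive integers \(i\) and \(i+1\). First I would dispose of the disjoint case: if \(\ccB \cap \ccC = \emptyset\) then \(\ccI(\ccB, \ccC) = 0\) by part (1), and disjointness is preserved by any permutation, so \(\ccI(\ccB^{(i\ i+1)}, \ccC^{(i\ i+1)}) = 0\) as well and no change occurs. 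Assuming \(\ccB \cap \ccC = \set{\ccx}\), then because \(\ccI\) is independent of \(\ccx\) and can be altered only by reversing a comparison between a \(\ccB\)-leg and a \(\ccC\)-leg, a change forces \(\set{i, i+1}\) to consist of one \(\ccB\)-leg and one \(\ccC\)-leg: if both lie in the same block's legs the leg-set is unchanged, and if at most one of \(i, i+1\) lies in \(\ccB \cup \ccC\) or one of them equals \(\ccx\), then the relevant order type is unchanged. Hence one block contains \(i\) and the other \(i+1\), both containing \(\ccx\), which is exactly \(\set{\ccB, \ccC} = \bigfamily{\set{i, \ccx, \ccy}, \set{i+1, \ccx, \ccz}}\) with \(\ccy \neq \ccz\).

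The main obstacle is the bookkeeping in part (2): one must verify precisely that a common in-neighbor is forced to keep \(\ccx\) and to reuse each block's retained leg as the other block's inserted point, and then orient the two "larger point" inequalities correctly before reading off the Iverson brackets. Once this normal form is in hand, part (1) is an immediate counting estimate and part (3) follows from the order-type reformulation, where the only delicate point is using the consecutiveness of \(i\) and \(i+1\) to rule out the cases in which one of them equals \(\ccx\) or lies outside \(\ccB \cup \ccC\).
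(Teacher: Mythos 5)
Your proposal is correct and takes essentially the same route as the paper: part (1) from the fact that distinct blocks of an $S(2,3,v)$ share at most one point together with the counting argument forcing a common point into any common in-neighbor, part (2) by explicitly enumerating the common in-neighbors (the paper lists them directly as $\{x,b',c'\}$ and $\{x,b',c\}$ in the cases $c<b'<c'$ and $c'<b'$), and part (3) by noting that the adjacent transposition $(i\ i+1)$ can flip only the relative order of the pair $\{i,i+1\}$, which by (2) must consist of one leg from each block. Your "normal form" bookkeeping in (2) and the invariant order-type formulation in (3) are just more explicit versions of the paper's case analysis, so there is nothing substantively different to flag.
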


\begin{proof}
 \comment{Proof.}
\label{sec:org0751f33}
\noindent
(1) If \(\ccB \cap \ccC = \emptyset\), then \(\ccI(\ccB, \ccC) = 0\).
If \(\size{\ccB \cap \ccC} \ge 2\), then \(\ccB = \ccC\). Hence \(\size{\ccB \cap \ccC} = 1\).

\noindent
(2) Since \(\ccb < \ccb'\) and \(\ccb < \ccc < \ccc'\), it follows that
\[
 \InNeighbors{\ccB} \cap \InNeighbors{\ccC} = \begin{cases}
 \emptyset & \tif \ccb' < \ccc, \\
 \bigfamily{\set{\ccx, \ccb', \ccc'} } & \tif \ccc < \ccb' < \ccc', \\
 \bigfamily{\set{\ccx, \ccb', \ccc'}, \set{\ccx, \ccb', \ccc}} & \tif \ccc' < \ccb'.
 \end{cases}
\]
Thus \eqref{equ:lem:intersection} holds.

\noindent
(3) By (1), if \(\size{\ccB \cap \ccC} \neq 1\) then \(\ccI(\ccB, \ccC) = \ccI\bigl(\ccB^{(i\ i + 1)}, \ccC^{(i\ i + 1)}\bigr)\).
Let \(\ccB = \set{\ccx, \ccb, \ccb'}\) and \(\ccC = \set{\ccx, \ccc, \ccc'}\).
We may assume that \(\ccb < \ccb'\) and \(\ccb < \ccc < \ccc'\).
If
\[
 [\ccy^{(i\ i + 1)} < \ccz^{(i\ i + 1)}] \neq [\ccy <\ccz], \mylink{p18-8}
\]
then \(\set{\ccy, \ccz} = \set{i, i + 1}\).
It follows from (2) that \(\set{i, i + 1} \in \bigfamily{\set{\ccb, \ccc}, \set{\ccb', \ccc}, \set{\ccb', \ccc'}}\).
Hence \textbf{\(\set{\ccB, \ccC} = \bigfamily{\set{\ccx, \cci, \ccy}, \set{\ccx, \cci + 1, \ccz}}\) for some \(\ccy, \ccz \in [\ccv]\) with \(\ccy \neq \ccz\)}.
\end{proof}

\begin{proof}[\yon Proof of Theorem \textrm {\ref{org1705e9e}} \yoff]
 \comment{Proof. [\yon Proof of Theorem \textrm {\ref{org1705e9e}} \yoff]}
\label{sec:org2f5cd9e}
Let \(\cD\) be an \(S(2, 3, \ccv)\) with block set \(\Block\).

We first show that if \(\cD\) is projective,
then \(\size{\ccs(\cD)} = 1\).
By Lemma \ref{orgcbfd905},
\[
 \sum_{\set{\ccB, \ccC} \in {\Block \choose 2}} \ccI(\ccB, \ccC) = \sum_{\set{\ccB, \ccC} \in {\Block \choose 2},\ \size{\ccB \cap \ccC} = 1} \hspace{-2em} \ccI(\ccB, \ccC).
\]
Let \(\set{\ccx, \ccb, \ccb'}\) and \(\set{\ccx, \ccc, \ccc'}\) be two distinct blocks of \(\cD\).
We may assume that \(\ccb < \ccb'\) and \(\ccb < \ccc < \ccc'\). \mylink{p18-16}
Since \(\cD\) is projective, there are \(\ccy, \ccz \in [\ccv]\) such that \(\set{\ccy, \ccb, \ccc}\), \(\set{\ccy, \ccb', \ccc'}\), \(\set{\ccz, \ccb, \ccc'}\), and \(\set{\ccz, \ccb', \ccc}\) are blocks.
Thus we obtain the following triple of pairs of blocks:
\[
 \Bigfamily{\, \bigfamily{\set{\ccx, \ccb, \ccb'}, \set{\ccx, \ccc, \ccc'}}, \quad \bigfamily{\set{\ccy, \ccb, \ccc}, \set{\ccy, \ccb', \ccc'}}, \quad \bigfamily{\set{\ccz, \ccb, \ccc'}, \set{\ccz, \ccb', \ccc}}\, }.
\]
By Lemma \ref{orgcbfd905},
\begin{equation}
\label{equ:six-blocks-sum}
\begin{split}
 \ccI\bigl(\set{\ccx, \ccb, \ccb'}&,  \set{\ccx, \ccc, \ccc'}\bigr) + \ccI\bigl(\set{\ccy, \ccb, \ccc},  \set{\ccy, \ccb', \ccc'}\bigr) + \ccI\bigl(\set{\ccz, \ccb, \ccc'}, \set{\ccz, \ccb', \ccc}\bigr) \\
 &= [\ccc < \ccb'] + [\ccc' < \ccb'] + [\ccb' < \ccc] + [\ccc' < \ccc] + [\ccb' < \ccc'] + [\ccc < \ccc'] = 3.
\end{split}
\end{equation}
Note that every pair \(\set{\ccB, \ccC}\) of blocks with \(\size{\ccB \cap \ccC} = 1\) belongs to exactly one such triple and there are \(\ccv {\lambda_1 \choose 2}/3\) triples.
Hence
\[
 \sum_{\set{\ccB, \ccC} \in {\ccB \choose 2},\ \size{\ccB \cap \ccC} = 1} \hspace{-2em} \ccI(\ccB, \ccC) = \frac{\ccv {\lambda_1 \choose 2}}{3} {3}  = \frac{\ccv (\ccv - 1)(\ccv - 3)}{8}.
\]
By Lemma \ref{org0f0d991},
\[
 \cca_0(\cD) + \cca_3(\cD) = \sum_{\set{B, C} \in {\Block \choose 2}} \ccI(\ccB, \ccC) - \frac{\ccv(\ccv - 1)(\ccv - 3)}{12} = \frac{\ccv (\ccv - 1)(\ccv - 3)}{24}.
\]
Therefore \(\ccs(\cD) = \set{\ccv(\ccv - 1)(\ccv - 3)/24}\).

\mylink{p17-11}We next show that if \(\cD\) is not projective,
then \(\size{\ccs(\cD)} > 1\).
Note that \(\ccv \ge 9\).
Since \(\cD\) is not projective, 
it follows from Remark \ref{orgde64e0f} that we may assume that
\[
 \set{0, \ccv - \ymod{2}, 2}, \set{1, \ccv - \ymod{2}, 3}, \set{0, \ccv - \ymod{3}, 3}, \set{1, \ccv - \ymod{3}, 4} \in \Block
\]
by replacing \(\cD\) with some \(\cD' \in \Orbit{\cD}\) if necessary (see Figure \ref{fig:org63c30f4}).
\begin{figure}[htbp]
\centering
\includegraphics[scale=1.0]{./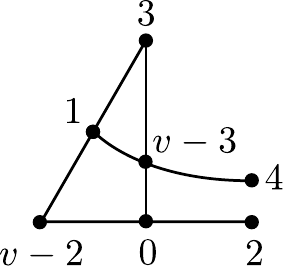}
\caption{\label{fig:org63c30f4}The four blocks of \(\cD\).}
\end{figure}
\textbf{We may also assume that \(\set{0, 1, \ccv - 1} \in \Block\)}.
\yon
We show that
\begin{equation}
\label{equ:thm-projective-claim}
 \sum_{\set{\ccB, \ccC} \in {\Block \choose 2}} \ccI(\ccB, \ccC) - \ccI(\ccB, \ccC)^{(0\ 1)} \neq 0 \quad \tor \sum_{\set{\ccB, \ccC} \in {\Block \choose 2}} \ccI(\ccB, \ccC)^{(2\ 3)} - \ccI(\ccB, \ccC)^{(2\ 3)(0\ 1)} \neq 0,
\end{equation}
where \(\ccI(\ccB, \ccC)^{(0\ 1)} = \ccI(\ccB^{(0\ 1)}, \ccC^{(0\ 1)})\).
If \(\set{\ccB, \ccC} \in {\Block \choose 2}\) and \(\ccI(\ccB, \ccC) \neq \ccI(\ccB, \ccC)^{(0\ 1)}\),
then, by Lemma \ref{orgcbfd905}, 
\[
 \set{\ccB, \ccC} = \bigfamily{\set{0, \ccx, \ccy}, \set{1, \ccx, \ccz}}
\]
for some \(\ccx, \ccy, \ccz \in [\ccv]\) with \(\ccy \neq \ccz\);
note that \(\ccx \neq \ccv - 1\) since \(\set{0, 1, \ccv - 1} \in \Block\).
For \(\ccx \in \set{2, 3, \dotsc, \ccv - \ymod{2}}\), let \(\ccB_{\ccx} = \set{0, \ccx, \ccy_{\ccx}} \in \Block\) and \(\ccC_x = \set{1, \ccx, \ccz_{\ccx}} \in \Block\). 
By assumption,
\yoff
\begin{equation}
\label{equ:theorem-assumption}
\begin{split}
 &\ccB_{\ccv - \ymod{2}} = \set{0, \ccv - \ymod{2}, 2},\ \ccC_{\ccv - \ymod{2}} = \set{1, \ccv - \ymod{2}, 3},\\
 &\ccB_{\ccv - \ymod{3}} = \set{0, \ccv - \ymod{3}, 3},\ \ccC_{\ccv - \ymod{3}} = \set{1, \ccv - \ymod{3}, 4}.
\end{split}
\end{equation}
\yon
We see that
\begin{equation}
\label{equ:sumBC}
 \sum_{\set{\ccB, \ccC} \in {\Block \choose 2}} \ccI(\ccB, \ccC) - \ccI(\ccB, \ccC)^{(0\ 1)} = \sum_{\ccx = 2}^{\ccv - 2} \ccI(\ccB_\ccx, \ccC_\ccx) - \ccI(\ccB_\ccx, \ccC_\ccx)^{(0\ 1)}.
\end{equation}
Similarly, by Lemma \ref{orgcbfd905}, if \(\set{\ccB, \ccC} \in {\Block \choose 2}\) and \(\ccI(\ccB, \ccC)^{(2\ 3)} \neq \ccI(\ccB, \ccC)^{(2\ 3)(0\ 1)}\), then 
\[
 \bigset{\ccB^{(2\ 3)}, \ccC^{(2\ 3)}} = \bigfamily{\set{0, \ccx, \ccy}^{(2\ 3)}, \set{1, \ccx, \ccz}^{(2\ 3)}}
\]
for some \(\ccx, \ccy, \ccz \in [\ccv]\) with \(\ccy \neq \ccz\),
and hence 
\yoff
\(\set{\ccB, \ccC} = \set{\ccB_{\ymod{\ccx}}, \ccC_{\ymod{\ccx}}}\).
\yon
Therefore
\begin{equation}
\label{equ:sumBC23}
 \sum_{\set{\ccB, \ccC} \in {\Block \choose 2}} \ccI(\ccB, \ccC)^{(2\ 3)} - \ccI(\ccB, \ccC)^{(2\ 3)(0\ 1)} = \sum_{\ccx = 2}^{\ccv - 2} \ccI(\ccB_\ccx, \ccC_\ccx)^{(2\ 3)} - \ccI(\ccB_\ccx, \ccC_\ccx)^{(2\ 3)(0\ 1)}.
\end{equation}
\yoff
If \(\ccI(\ccB_{\ymod{\ccx}}, \ccC_{\ymod{\ccx}}) \neq \ccI(\ccB_{\ymod{\ccx}}, \ccC_{\ymod{\ccx}})^{\ymod{^{(2\ 3)}}}\),
then \textbf{\(\set{\ccB_{\ccx}, \ccC_{\ccx}} = \bigfamily{\set{2, \ccx, \ccu}, \set{3, \ccx, \ccw}}\) for some \(\ccu, \ccw \in [\ccv]\)},
so \(\ymod{\ccx} = \ccv - 2\) by \eqref{equ:theorem-assumption}. 
Similarly, if 
\(\ccI(\ccB_{\ymod{\ccx}}, \ccC_{\ymod{\ccx}})^{\ymod{(0\ 1)}} \neq \ccI(\ccB_{\ymod{\ccx}}, \ccC_{\ymod{\ccx}})^{\ymod{(2\ 3)(0\ 1)}}\), 
then \(\ymod{\ccx} = \ccv - \ymod{2}\).
Let \(\ccB_{\ymod{o}} = \ccB_{\ccv - \ymod{2}}\) and \(\ccC_{\ymod{o}} = \ccC_{\ccv - \ymod{2}}\).
By \eqref{equ:sumBC} and \eqref{equ:sumBC23},
\yon
\begin{align*}
 &\sum_{\set{\ccB, \ccC} \in {\Block \choose 2}} \ccI(\ccB, \ccC) - \ccI(\ccB, \ccC)^{(0\ 1)} - \left(\ccI(\ccB, \ccC)^{(2\ 3)} - \ccI(\ccB, \ccC)^{(2\ 3)(0\ 1)}\right)\\
 &=  \sum_{\ccx = 2}^{\ccv - 2} \ccI(\ccB_\ccx, \ccC_\ccx) - \ccI(\ccB_\ccx, \ccC_\ccx)^{(0\ 1)} - \left(\ccI(\ccB_\ccx, \ccC_\ccx)^{(2\ 3)} - \ccI(\ccB_\ccx, \ccC_\ccx)^{(2\ 3)(0\ 1)}\right)\\
 &=  \sum_{\ccx = 2}^{\ccv - 2} \ccI(\ccB_\ccx, \ccC_\ccx) - \ccI(\ccB_\ccx, \ccC_\ccx)^{(2\ 3)} - \left(\ccI(\ccB_\ccx, \ccC_\ccx)^{(0\ 1)} - \ccI(\ccB_\ccx, \ccC_\ccx)^{(2\ 3)(0\ 1)}\right)\\
 &= \ccI(\ccB_{\ymod{o}}, \ccC_{\ymod{o}}) - \ccI(\ccB_{\ymod{o}}, \ccC_{\ymod{o}})^{(2\ 3)} - \left(\ccI(\ccB_{\ymod{o}}, \ccC_{\ymod{o}})^{(0\ 1)} - \ccI(\ccB_{\ymod{o}}, \ccC_{\ymod{o}})^{(2\ 3)(0\ 1)}\right).
\end{align*}
\yoff
Since
\begin{align*}
 \ccI(\ccB_{\ymod{o}}, \ccC_{\ymod{o}}) & =  \ccI(\set{0, \ccv - \ymod{2}, 2}, \set{1, \ccv - \ymod{2}, 3}) = 1, \\
\ccI(\ccB_{\ymod{o}}, \ccC_{\ymod{o}})^{\ymod{(2\ 3)}}  &= \ccI(\set{0, \ccv - \ymod{2}, 3}, \set{1, \ccv - \ymod{2}, 2}) = 2, \\
\ccI(\ccB_{\ymod{o}}, \ccC_{\ymod{o}})^{\ymod{(0\ 1)}}  &= \ccI(\set{1, \ccv - \ymod{2}, 2}, \set{0, \ccv - \ymod{2}, 3}) = 2, \\
\ccI(\ccB_{\ymod{o}}, \ccC_{\ymod{o}})^{\ymod{(2\ 3)(0\ 1)}} &= \ccI(\set{1, \ccv - \ymod{2}, 3}, \set{0, \ccv - \ymod{2}, 2}) = 1,
\end{align*}
\yon
we see that
\[
 \sum_{\set{\ccB, \ccC} \in {\Block \choose 2}} \ccI(\ccB, \ccC) - \ccI(\ccB, \ccC)^{(0\ 1)} - \left(\ccI(\ccB, \ccC)^{(2\ 3)} - \ccI(\ccB, \ccC)^{(2\ 3)(0\ 1)}\right) = 1 - 2 - (2 - 1) = - 2.
\]
Therefore \eqref{equ:thm-projective-claim} holds. It follows from Lemma \ref{org0f0d991} that \(\size{\ccs(\cD)} > 1\).
\yoff
\end{proof}

\comment{bibliography}
\label{sec:orgeffe3fe}

\end{document}